\DeclareMathOperator{\Cl}{Cl}
\DeclareMathOperator{\BCl}{\mathbf{Cl}}
\DeclareMathOperator{\At}{At}
\newtheorem{theorem}{Theorem}[section]
\newtheorem{lemma}[theorem]{Lemma}
\newtheorem{remark}[theorem]{Remark}
\newtheorem{example}[theorem]{Example}
\newtheorem{corollary}[theorem]{Corollary}
\title{Induced orthogonality in semilattices with $0$ and in pseudocomplemented lattices and posets}
\author{Ivan~Chajda, Miroslav~Kola\v r\'ik and Helmut~L\"anger$^2$}
\date{}
\begin{document}
	
\footnotetext[1]{Support of the research of the first author by the Czech Science Foundation (GA\v CR), project 24-14386L, entitled ``Representation of algebraic semantics for substructural logics'', and by IGA, project P\v rF~2024~011, is gratefully acknowledged.}

\footnotetext[2]{Corresponding author}
	
\maketitle
	
\begin{abstract}
  On an arbitrary meet-semilattice $\mathbf S=(S,\wedge,0)$ with $0$ we define an orthogonality relation and investigate the lattice $\BCl(\mathbf S)$ of all subsets of $S$ closed under this orthogonality. We show that if $\mathbf S$ is atomic then $\BCl(\mathbf S)$ is a complete atomic Boolean algebra. If $\mathbf S$ is a pseudocomplemented lattice, this orthogonality relation can be defined by means of the pseudocomplementation. Finally, we show that if $\mathbf S$ is a complete pseudocomplemented lattice then $\BCl(\mathbf S)$ is a complete Boolean algebra. For pseudocomplemented posets a similar result holds if the subset of pseudocomplements forms a complete lattice satisfying a certain compatibility condition.
\end{abstract}
	
{\bf AMS Subject Classification:} 06B05, 06C15, 06D15, 06D25, 08A11
	
{\bf Keywords:} meet-semilattice, lattice, orthogonality, closed subset, ortholattice, Boolean algebra, pseudocomplemented lattice, pseudocomplemented poset
	
\section{Introduction}

By an orthogonality on a set $S$ is meant a symmetric and irreflexive binary relation~$\perp$. For every subset $A$ of $S$ put $A^\perp:=\{x\in S\mid x\perp y\text{ for all }y\in A\}$. A subset $A$ of $S$ is called {\em closed} if $A^{\perp\perp}=A$. It is well-known that the lattice of all closed subsets of a given set $S$ forms a complete ortholattice. Several authors studied when this lattice is orthomodular, see \cite D, \cite J and \cite L and, for Hilbert spaces, see also \cite{LV} and \cite{PV}.

We slightly modify the definition of an orthogonality relation on a set $S$ with a distinguished element $0$. We require such a relation $\perp$ to be symmetric and to have the property that $0$ is the only element $x$ of $S$ satisfying $x\perp x$. We study lattices of closed subsets of meet-semilattices or lattices with $0$ where the orthogonality relation is derived by means of the meet-operation. In the case of pseudocomplemented lattices, the orthogonality relation can be introduced via the pseudocomplementation. It is expected that if a meet-semilattice $\mathbf S$ with $0$ is considered instead of a set and the orthogonality is derived from the meet-operation then the lattice $\BCl(\mathbf S)$ of closed subsets will satisfy a stronger property then orthomodularity. We show that if $\mathbf S$ is atomic then $\BCl(\mathbf S)$ forms a complete atomic Boolean algebra. Especially, we consider the case when $\mathbf S$ is either the modular lattice $\mathbf M_n$ or a Boolean algebra. If $\mathbf L$ is a complete pseudocomplemented lattice then $\BCl(\mathbf L)$ turns out to be a complete Boolean algebra. Several examples illuminating these concepts and results are included in Sections~2 and 3. If $\mathbf P=(P,\le)$ is a pseudocomplemented poset then we can define orthogonality on $\mathbf P$ by means of pseudocomplementation in the same way as it was done in the case of pseudocomplemented lattices. It turns out that if its subset $P^*$ of all pseudocomplements is a complete lattice satisfying a certain compatibility condition then the complete ortholattice $\BCl(\mathbf P)$ of closed subsets of $\mathbf P$ is again a Boolean algebra. We also characterize finite pseudocomplemented posets whose poset of pseudocomplements forms a meet-semilattice by means of a forbidden configuration.

\section{Meet-semilattices and lattices with $0$}

The aim of this section is to show how orthogonality can be introduced in a meet-semilattice with $0$ and to describe the structure of the lattice of closed subsets.

In the following let $\mathbf S=(S,\wedge,0)$ be a meet-semilattice with $0$, $a\in S$ and $A\subseteq S$. On $S$ we define a binary relation $\perp$, the so-called {\em orthogonality}, as follows:
\[
x\perp y\text{ if and only if }x\wedge y=0
\]
($x,y\in S$). Clearly $\perp$ is symmetric, and if $x\perp x$ then $x=0$. Thus, if $\mathbf S$ is non-trivial then $\perp$ is not reflexive. Further define
\[
	A^\perp:=\{x\in S\mid x\perp y\text{ for all }y\in A\}.
\]
For the sake of brevity, instead of $\{a\}^\perp$ we simply write $a^\perp$. Now we are ready to define our main concept.

The set $A$ is called {\em closed} if $A^{\perp\perp}=A$. Let $\Cl(\mathbf S)$ denote the set of all closed subsets of $S$. Obviously, $A$ is closed if and only if there exists some subset $B$ of $S$ with $B^\perp=A$.

\begin{remark}\label{rem1}
	The pair $(^\perp,^\perp)$ is the Galois connection between $(2^S,\subseteq)$ and $(2^S,\subseteq)$ induced by the relation $\perp$. This implies that $\BCl(\mathbf S):=\big(\Cl(\mathbf S),\subseteq\big)$ is a complete lattice with smallest element $S^\perp=\{0\}$ and greatest element $\emptyset^\perp=S$ and
	\begin{align*}
		     \bigvee_{i\in I}A_i & =\left(\bigcup_{i\in I}A_i\right)^{\perp\perp}, \\
   		   \bigwedge_{i\in I}A_i & =\bigcap_{i\in I}A_i
	\end{align*}
	for all families $A_i,i\in I,$ of elements of $\Cl(\mathbf S)$. Moreover, we have
\[
\left(\bigcup_{i\in I}A_i\right)^\perp=\bigcap_{i\in I}A_i^\perp
\]
for all families $A_i,i\in I,$ of subsets of $S$, especially
\[
A^\perp=\bigcap_{x\in A}x^\perp
\]
for all $A\subseteq S$. From the last equality we deduce that $\Cl(\mathbf S)$ is the smallest subset of $2^S$ including $\{x^\perp\mid x\in S\}$ and being closed under arbitrary intersections. We have that $^\perp$ is antitone and $A\subseteq A^{\perp\perp}$, $A^\perp=A^{\perp\perp\perp}$ and $A\cap A^\perp=\{0\}$ for all $A\subseteq S$.
\end{remark}

The following lemma is well-known (cf.\ \cite J).

\begin{lemma}\label{lem3}
	If $\mathbf S=(S,\wedge,0)$ is a meet-semilattice with $0$ then
	\[
	\BCl(\mathbf S):=\big(\Cl(\mathbf S),\vee,\cap,{}^\perp,\{0\},S\big)
	\]
is a complete ortholattice with
	\begin{align*}
	  \bigvee_{i\in I}A_i & =\left(\bigcup_{i\in I}A_i\right)^{\perp\perp}, \\
	\bigwedge_{i\in I}A_i & =\bigcap_{i\in I}A_i
\end{align*}
for all families $A_i,i\in I,$ of elements of $\Cl(\mathbf S)$.
\end{lemma}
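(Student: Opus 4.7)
The plan is to leverage Remark~\ref{rem1} almost entirely and simply verify the three missing axioms of an orthocomplementation on the complete lattice already constructed there. Recall that an orthocomplementation on a bounded lattice is a unary operation that is antitone, involutive, and sends each element to a complement. Since Remark~\ref{rem1} tells us that $\BCl(\mathbf S)$ is a complete lattice with smallest element $\{0\}$, greatest element $S$, and the stated join/meet formulas, the theorem reduces to showing that $^\perp$ is a well-defined unary operation on $\Cl(\mathbf S)$ satisfying these three properties.

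First I would check that $^\perp$ maps $\Cl(\mathbf S)$ into itself. For any $A\subseteq S$, Remark~\ref{rem1} gives $A^\perp=A^{\perp\perp\perp}$, so in particular $(A^\perp)^{\perp\perp}=A^\perp$, which shows $A^\perp\in\Cl(\mathbf S)$ regardless of whether $A$ itself is closed. Involutivity on $\Cl(\mathbf S)$ is then immediate from the very definition of ``closed'': $A^{\perp\perp}=A$ for each $A\in\Cl(\mathbf S)$. Antitonicity of $^\perp$ is also recorded in Remark~\ref{rem1}.

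The only remaining step is the complementation law. For $A\in\Cl(\mathbf S)$ the meet $A\wedge A^\perp=A\cap A^\perp=\{0\}$, again from Remark~\ref{rem1}. For the join, use the join formula and De Morgan-type identity from the remark:
\[
A\vee A^\perp=(A\cup A^\perp)^{\perp\perp}=\bigl(A^\perp\cap A^{\perp\perp}\bigr)^\perp=(A^\perp\cap A)^\perp=\{0\}^\perp.
\]
Since $0\wedge x=0$ for every $x\in S$, we have $\{0\}^\perp=S$, giving $A\vee A^\perp=S$ as required.

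I do not expect any genuine obstacle here, because Remark~\ref{rem1} has pre-packaged every non-trivial ingredient: the lattice structure, the Galois-connection identities, and the fact $A\cap A^\perp=\{0\}$. The only mildly delicate point is recognising that the computation of $A\vee A^\perp$ should be routed through the identity $(A\cup A^\perp)^\perp=A^\perp\cap A^{\perp\perp}$ rather than attempted directly, but this is immediate once the join formula $\bigvee_i A_i=(\bigcup_i A_i)^{\perp\perp}$ is in hand.
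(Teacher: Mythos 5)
Your proof is correct, and it follows exactly the route the paper intends: the paper states this lemma without proof (citing it as well known), but all the ingredients you use are precisely those pre-packaged in Remark~\ref{rem1}. Each step checks out — in particular the computation $A\vee A^\perp=(A^\perp\cap A^{\perp\perp})^\perp=(A^\perp\cap A)^\perp=\{0\}^\perp=S$ is valid because $A$ is closed and $0\wedge x=0$ for all $x\in S$ — so there is nothing to add.
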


For a poset $\mathbf P=(P,\le)$ and $a\in P$ define
\begin{align*}
	(a] & :=\{x\in P\mid x\le a\}, \\
	[a) & :=\{x\in P\mid x\ge a\}.
\end{align*}
Especially, in the set of all subsets of a set $S$, if $B\subseteq A$ then $(B]=\{C\mid C\subseteq B\}$.

If $\mathbf L=(L,\vee,\wedge,0)$ is a lattice with $0$ then notions like orthogonality and closed subset have the same meaning as for the corresponding meet-semilattice $(L,\wedge,0)$ with $0$.

For the reader's convenience, we illustrate the aforementioned concepts by the following example.

\begin{example}\label{ex1}
	Consider the lattice $\mathbf L_1=(L,\vee,\wedge,0)$ with $0$ depicted in Fig.~1:
	\begin{center}
		\setlength{\unitlength}{1.3mm}
		\begin{picture}(110,60)
			\put(10,25){\circle*{1.5}}
			\put(25,10){\circle*{1.5}}
			\put(25,40){\circle*{1.5}}
			\put(40,25){\circle*{1.5}}
			\put(70,40){\circle*{1.5}}
			\put(85,25){\circle*{1.5}}
			\put(85,40){\circle*{1.5}}
			\put(85,55){\circle*{1.5}}
			\put(100,40){\circle*{1.5}}
			\put(25,10){\line(-1,1){15}}
			\put(25,10){\line(1,1){15}}
			\put(25,40){\line(-1,-1){15}}
			\put(25,40){\line(1,-1){15}}
			\put(85,25){\line(0,1){30}}
			\put(85,25){\line(-1,1){15}}
			\put(85,25){\line(1,1){15}}
			\put(85,55){\line(-1,-1){15}}
			\put(85,55){\line(1,-1){15}}
			\put(10,25){\line(4,1){60}}
			\put(25,10){\line(4,1){60}}
			\put(25,40){\line(4,1){60}}
			\put(40,25){\line(4,1){60}}
			\put(24.1,6){$0$}
			\put(83.7,57){$1$}
			\put(7,24){$a$}
			\put(40.5,22){$b$}
			\put(24,42){$c$}
			\put(84,21.5){$d$}
			\put(101.5,39){$f$}
			\put(86.5,39){$g$}
			\put(67.7,41){$e$}
			\put(38,0){{\rm Fig.~1: Lattice $\mathbf L_1$ with $0$}}
		\end{picture}
	\end{center}
	We have $0^\perp=L$, $a^\perp=\{0,b,d,f,g\}$, $b^\perp=\{0,a,d,e,g\}$, $c^\perp=(g]$, $d^\perp=g^\perp=(c]$, $e^\perp=(b]$, $f^\perp=(a]$ and $1^\perp=\{0\}$. Since $\{x^\perp\mid x\in L\}$ is closed with respect to intersections we obtain $\Cl(\mathbf L_1)=\{x^\perp\mid x\in L\}$. The ortholattice $\BCl(\mathbf L_1)$ is visualized in Fig.~2:
	\begin{center}
		\setlength{\unitlength}{1.3mm}
		\begin{picture}(50,60)
			\put(10,25){\circle*{1.5}}
			\put(10,40){\circle*{1.5}}
			\put(25,10){\circle*{1.5}}
			\put(25,25){\circle*{1.5}}
			\put(25,40){\circle*{1.5}}
			\put(25,55){\circle*{1.5}}
			\put(40,25){\circle*{1.5}}
			\put(40,40){\circle*{1.5}}
			\put(10,25){\line(0,1){15}}
			\put(40,25){\line(0,1){15}}
			\put(25,10){\line(0,1){15}}
			\put(25,40){\line(0,1){15}}
			\put(25,10){\line(-1,1){15}}
			\put(25,10){\line(1,1){15}}
			\put(25,25){\line(-1,1){15}}
			\put(25,25){\line(1,1){15}}
			\put(25,40){\line(-1,-1){15}}
			\put(25,40){\line(1,-1){15}}
			\put(25,55){\line(-1,-1){15}}
			\put(25,55){\line(1,-1){15}}
			\put(22.7,6){$\{0\}$}
			\put(4,24){$(a]$}
			\put(27,24){$(g]$}
			\put(42,24){$(b]$}
			\put(42,39){$\{0,b,d,f,g\}$}
			\put(-9,39){$\{0,a,d,e,g\}$}
			\put(27,39){$(c]$}
			\put(24.2,57){$L$}
			\put(6,0){{\rm Fig.~2: Ortholattice $\BCl(\mathbf L_1)$}}
		\end{picture}
	\end{center}
\end{example}

In the sequel for any set $A$ let $\mathbf2^A$ denote the Boolean algebra $(2^A,\subseteq)$ of all subsets of $A$. If $A$ is finite, say $|A|=n$, then we write $\mathbf2^n$ instead of $\mathbf2^A$. Moreover, for every positive integer $n$ let $\mathbf M_n$ denote the lattice consisting of an $n$-element antichain together with $0$ and $1$, see Fig.~3.

\begin{center}
  \setlength{\unitlength}{1.3mm}
  \begin{picture}(80,45)
    \put(45,25){\circle*{0.5}}
    \put(47.5,25){\circle*{0.5}}
    \put(50,25){\circle*{0.5}}
    \put(10,25){\circle*{1.5}}
    \put(25,25){\circle*{1.5}}
    \put(40,10){\circle*{1.5}}
    \put(40,40){\circle*{1.5}}
    \put(70,25){\circle*{1.5}}
    \put(40,10){\line(-2,1){30}}
    \put(40,10){\line(2,1){30}}
    \put(40,10){\line(-1,1){15}}
    \put(40,40){\line(-2,-1){30}}
    \put(40,40){\line(2,-1){30}}
    \put(40,40){\line(-1,-1){15}}
    \put(39,6){$0$}
    \put(39,42){$1$}
    \put(72,24){$a_n$}
    \put(5.8,24){$a_1$}
    \put(27,24){$a_2$}
    \put(27,0){{\rm Fig.~3: Lattice $\mathbf M_n$}}
  \end{picture}
\end{center}

The result illustrated in Example~\ref{ex1} can be extended to arbitrary atomic semilattices with $0$.

\begin{theorem}\label{th1}
	Let $\mathbf S=(S,\wedge,0)$ be an atomic meet-semilattice with $0$ and let $\At\mathbf S$ denote the set of all atoms of $\mathbf S$. Then $\BCl(\mathbf S)\cong\mathbf2^{\At\mathbf S}$.
\end{theorem}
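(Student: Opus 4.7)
The plan is to exhibit an explicit isomorphism by identifying each closed subset with the set of atoms it contains. For every $T\subseteq\At\mathbf S$, put $A_T:=\{x\in S\mid\text{every atom }a\le x\text{ lies in }T\}$; in particular $0\in A_T$ and $T\subseteq A_T$, while $A_\emptyset=\{0\}$ and $A_{\At\mathbf S}=S$. The target map will be $\Phi\colon\Cl(\mathbf S)\to 2^{\At\mathbf S}$, $\Phi(B):=B\cap\At\mathbf S$, with proposed inverse $T\mapsto A_T$.

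The engine of the proof is the identity $A_T^\perp=A_{\At\mathbf S\setminus T}$, which I would verify using atomicity in the form ``$u\wedge v=0$ iff no atom lies below both $u$ and $v$''. If $x\in A_T^\perp$ then $x\perp a$ for each $a\in T$ (since $T\subseteq A_T$), so no atom of $T$ lies below $x$, placing $x$ in $A_{\At\mathbf S\setminus T}$. Conversely, if every atom below $x$ avoids $T$ and $y\in A_T$ has all its atoms in $T$, then $x$ and $y$ share no common atom below them, hence $x\wedge y=0$. Applying the identity twice gives $A_T^{\perp\perp}=A_T$, so each $A_T$ is closed, and clearly $\Phi(A_T)=T$.

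The substantive step is showing $B=A_{\Phi(B)}$ for every closed $B$. The inclusion $B\subseteq A_{\Phi(B)}$ follows by taking $x\in B$ and an atom $a\le x$: for any $y\in B^\perp$ one has $a\wedge y\le x\wedge y=0$, so $a\in B^{\perp\perp}=B$, which places $a$ in $\Phi(B)$.

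The main obstacle is the reverse inclusion $A_{\Phi(B)}\subseteq B$: given $x\in A_{\Phi(B)}$ I must show $x\in B^{\perp\perp}$, i.e.\ $x\wedge y=0$ for every $y\in B^\perp$. If $x\wedge y\neq 0$, atomicity produces an atom $b\le x\wedge y$; then $b\le x$ forces $b\in\Phi(B)\subseteq B$, while $b\le y$ with $y\in B^\perp$ forces $b\wedge y=0$, contradicting $b\wedge y=b\neq 0$. Hence $x\wedge y=0$, proving $x\in B$. Since $\Phi$ and $T\mapsto A_T$ are both order-preserving with respect to $\subseteq$, $\Phi$ is an order isomorphism, and by Lemma~\ref{lem3} it is an isomorphism of complete lattices, yielding $\BCl(\mathbf S)\cong\mathbf 2^{\At\mathbf S}$.
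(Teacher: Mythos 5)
Your proposal is correct and follows essentially the same route as the paper: your $A_T$ is exactly the paper's $f(T)=\{x\in S\mid x\not\ge y\text{ for all atoms }y\notin T\}$, and both arguments rest on computing orthocomplements via the atoms below an element and on the inverse map $B\mapsto B\cap\At\mathbf S$. The verification of $A_T^\perp=A_{\At\mathbf S\setminus T}$ and of $B=A_{\Phi(B)}$ for closed $B$ matches the paper's computation of $A^\perp$ and its identification $\Cl(\mathbf S)=\{f(C)\mid C\subseteq\At\mathbf S\}$, so no further comparison is needed.
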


\begin{proof}
	Let $A\subseteq S$ and put
	\[
	B:=\{x\in S\mid x\not\ge y\text{ for all }y\in\At\mathbf S\text{ with }[y)\cap A\ne\emptyset\}.
	\]
	We show $A^\perp=B$. Let $a\in A^\perp$. Suppose $a\notin B$. Then there exists some $b\in\At\mathbf S$ with $a\ge b$ and $[b)\cap A\ne\emptyset$. Let $c\in [b)\cap A$. Then $c\in A$ and $0<b\le a\wedge c$ contradicting $a\in A^\perp$. Hence $a\in B$. Conversely, let $a\in B$. Suppose $a\notin A^\perp$. Then there exists some $d\in A$ with $a\wedge d\ne0$. Since $\mathbf S$ is atomic there exists some $e\in\At\mathbf S$ with $e\le a\wedge d$. But then $e\in\At\mathbf S$ and $[e)\cap A\supseteq\{d\}\ne\emptyset$, but $a\ge e$ contradicting $a\in B$. Hence $a\in A$. This completes the proof of $A^\perp=B$. For every $C\subseteq\At\mathbf S$ put
	\[
	f(C):=\{x\in S\mid x\not\ge y\text{ for all }y\in(\At\mathbf S)\setminus C\}.
	\]
	Then $\Cl(\mathbf S)=\{f(C)\mid C\subseteq\At\mathbf S\}$. Now let $C,D\subseteq\At\mathbf S$. If $C\subseteq D$ then $f(C)\subseteq f(D)$. If, conversely, $f(C)\subseteq f(D)$ then
	\[
	C=(\At\mathbf S)\cap f(C)\subseteq(\At\mathbf S)\cap f(D)=D.
	\]
	This shows that $f$ is an isomorphism from the Boolean algebra $\mathbf2^{\At\mathbf S}$ to $\BCl(\mathbf S)$ completing the proof of the theorem.
\end{proof}

In particular cases of the lattice $\mathbf L$, we can precise the lattice $\BCl(\mathbf L)$ as follows.

\begin{corollary}
	\
	\begin{enumerate}[{\rm(i)}]
		\item If $\mathbf L=(L,\vee,\wedge)$ denotes the modular lattice consisting of a non-empty antichain $A$ together with $0$ and $1$ then $\BCl(\mathbf L)\cong\mathbf2^A$.
		\item We have $\BCl(\mathbf M_n)\cong\mathbf2^n$ for every positive integer $n$.
		\item We have $\BCl(\mathbf2^A)\cong\mathbf2^A$ for every set $A$.
		\item We have $\BCl(\mathbf L)\cong\mathbf L$ for every complete atomic Boolean algebra.
	\end{enumerate}
\end{corollary}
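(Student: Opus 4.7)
The plan is to deduce all four parts directly from Theorem~\ref{th1}, since each of the lattices in question is atomic. The only work is to identify the set of atoms in each case and chain together a couple of standard isomorphisms.

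For (i), I would first observe that in the lattice $\mathbf L$ obtained from a non-empty antichain $A$ by adjoining $0$ and $1$, the atoms are precisely the elements of $A$: any $a \in A$ covers $0$ by construction, and there are no other non-zero non-$1$ elements. Since such a lattice is clearly atomic, Theorem~\ref{th1} immediately yields $\BCl(\mathbf L) \cong \mathbf 2^{\At \mathbf L} = \mathbf 2^A$. Part (ii) is then just the special case $|A|=n$ applied to $\mathbf M_n$, whose antichain of atoms is $\{a_1,\dots,a_n\}$.

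For (iii), the atoms of $\mathbf 2^A$ are exactly the singletons $\{a\}$ with $a\in A$, and $\mathbf 2^A$ is atomic. Hence Theorem~\ref{th1} gives $\BCl(\mathbf 2^A) \cong \mathbf 2^{\At \mathbf 2^A}$, and the obvious bijection $a \mapsto \{a\}$ between $A$ and $\At \mathbf 2^A$ induces an isomorphism $\mathbf 2^{\At \mathbf 2^A} \cong \mathbf 2^A$, so $\BCl(\mathbf 2^A)\cong\mathbf 2^A$.

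For (iv), I would invoke the classical representation theorem that every complete atomic Boolean algebra $\mathbf L$ is isomorphic to $\mathbf 2^{\At \mathbf L}$. Combining this with (iii) applied to $A := \At \mathbf L$ yields
\[
\BCl(\mathbf L) \cong \BCl(\mathbf 2^{\At \mathbf L}) \cong \mathbf 2^{\At \mathbf L} \cong \mathbf L,
\]
which completes the proof. There is really no genuine obstacle here; the only mild subtlety worth flagging is checking that in the antichain-with-$0$-and-$1$ construction the elements of $A$ are indeed the full set of atoms (so that we do not miss or count extra atoms), but this is immediate from the description of the order.
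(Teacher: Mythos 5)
Your proof is correct and follows exactly the route the paper intends: the paper gives no written proof of (i)--(iii) beyond the implicit appeal to Theorem~\ref{th1} with the obvious identification of atoms, and for (iv) it notes precisely the representation $\mathbf L\cong\mathbf2^A$ for a complete atomic Boolean algebra that you invoke. Nothing is missing.
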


The last statement follows from the fact that every complete atomic Boolean algebra is isomorphic to $\mathbf2^A$ for some suitable set $A$.

The next theorem enables us to determine the structure of a number of lattices of closed subsets for various direct products of modular and/or Boolean lattices.

\begin{theorem}
	For each $i\in I$ let $\mathbf S_i=(S_i,\wedge,0)$ be a meet-semilattice with $0$. Then
\[
\BCl\left(\prod_{i\in I}\mathbf S_i\right)=\prod_{i\in I}\BCl(\mathbf S_i).
	\]
\end{theorem}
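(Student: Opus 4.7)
The plan is to reduce everything to coordinates. Write $\mathbf S:=\prod_{i\in I}\mathbf S_i$ and let $\pi_i\colon S\to S_i$ denote the $i$-th projection. Since the meet on $\mathbf S$ is taken coordinatewise, orthogonality also factors: $x\perp y$ in $\mathbf S$ if and only if $\pi_i(x)\perp\pi_i(y)$ in $\mathbf S_i$ for every $i\in I$. The heart of the argument is the identity
\[
A^\perp=\prod_{i\in I}\pi_i(A)^\perp\qquad(A\subseteq S),
\]
which follows from a routine quantifier swap: the condition $x\in A^\perp$ reads $(\forall y\in A)(\forall i\in I)\;x_i\wedge y_i=0$, which after exchange of the two universal quantifiers becomes $(\forall i\in I)(\forall z\in\pi_i(A))\;x_i\wedge z=0$.

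Next I would iterate this identity. Each factor $\pi_i(A)^\perp$ contains $0$ and is therefore non-empty, hence $\pi_i\bigl(\prod_{j\in I}\pi_j(A)^\perp\bigr)=\pi_i(A)^\perp$; a second application of the key identity then yields
\[
A^{\perp\perp}=\prod_{i\in I}\pi_i(A)^{\perp\perp}.
\]
Consequently every closed subset of $\mathbf S$ is a direct product of closed subsets of the factors, and conversely, if $A_i\in\Cl(\mathbf S_i)$ for each $i$, then each $A_i$ is non-empty, so the same two applications show that $\bigl(\prod_iA_i\bigr)^{\perp\perp}=\prod_iA_i^{\perp\perp}=\prod_iA_i$, i.e.\ $\prod_iA_i$ is closed.

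It remains to verify that the bijection $\Phi\colon(A_i)_{i\in I}\mapsto\prod_{i\in I}A_i$ from $\prod_{i\in I}\Cl(\mathbf S_i)$ onto $\Cl(\mathbf S)$ is an ortholattice isomorphism. Non-emptiness of each $A_i$ makes $\Phi$ an order-isomorphism, and the formula $\bigl(\prod_iA_i\bigr)^\perp=\prod_iA_i^\perp$ shows that $\Phi$ preserves orthocomplements; since the meet and join in a complete ortholattice are determined by its order, $\Phi$ automatically preserves all lattice operations. The main delicate point in this plan is the quantifier management in the key identity, together with the observation that the projection $\pi_i$ commutes with cartesian products only under a non-emptiness assumption --- a hypothesis automatic here because every closed subset contains $0$.
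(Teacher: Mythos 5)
Your proposal is correct and follows essentially the same route as the paper: both arguments rest on the coordinatewise identity $A^\perp=\prod_{i\in I}p_i(A)^\perp$ obtained by swapping the two universal quantifiers, and then deduce that closed sets of the product are exactly products of closed sets. You are somewhat more careful than the paper about the non-emptiness of the factors (needed so that projections commute with products) and about spelling out the resulting order isomorphism, but these are refinements of the same argument rather than a different one.
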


\begin{proof}
	For $j\in I$ let $p_j$ denote the $j$-th projection from $\prod\limits_{i\in I}S_i$ onto $S_j$. Then for $A\subseteq\prod\limits_{i\in I}S_i$ we have
	\begin{align*}
	A^\perp & =\{x\in\prod_{i\in I}S_i\mid x\wedge y=0\text{ for all }y\in A\}= \\
	        & =\{x\in\prod_{i\in I}S_i\mid p_j(x)\wedge p_j(y)=0\text{ for all }j\in I\text{ and all }y\in A\}= \\
	        & =\{x\in\prod_{i\in I}S_i\mid p_j(x)\wedge y_j=0\text{ for all }j\in I\text{ and all }y_j\in p_j(A)\}=\prod_{i\in I}\big(p_i(A)\big)^\perp.
	\end{align*}
Hence $\Cl\left(\prod\limits_{i\in I}\mathbf S_i\right)=\prod\limits_{i\in I}\Cl(\mathbf S_i)$. If $A_i,B_i\subseteq S_i$ for all $i\in I$ then $\prod\limits_{i\in I}A_i\subseteq\prod\limits_{i\in I}B_i$ if and only if $A_i\subseteq B_i$ for all $i\in I$. Thus, finally we obtain $\BCl\left(\prod\limits_{i\in I}\mathbf S_i\right)=\prod\limits_{i\in I}\BCl(\mathbf S_i)$.
\end{proof}

\begin{corollary}
	$\BCl(\mathbf M_n\times\mathbf2^m)\cong\mathbf2^n\times\mathbf2^m\cong\mathbf2^{n+m}$.
\end{corollary}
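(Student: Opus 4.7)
The plan is to combine the factorization theorem for $\BCl$ of a direct product (the theorem immediately preceding the corollary) with the structural results for the individual factors already collected in the previous corollary.

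First I would apply the direct-product theorem to the two factors $\mathbf{M}_n$ and $\mathbf{2}^m$ to obtain
\[
\BCl(\mathbf{M}_n\times\mathbf{2}^m)\cong\BCl(\mathbf{M}_n)\times\BCl(\mathbf{2}^m).
\]
Then I would quote parts (ii) and (iii) of the previous corollary, which give $\BCl(\mathbf{M}_n)\cong\mathbf{2}^n$ and $\BCl(\mathbf{2}^m)\cong\mathbf{2}^m$ (taking $A$ to be any $m$-element set in part~(iii)). Substituting these yields $\BCl(\mathbf{M}_n\times\mathbf{2}^m)\cong\mathbf{2}^n\times\mathbf{2}^m$, which is the first of the two claimed isomorphisms.

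For the second isomorphism $\mathbf{2}^n\times\mathbf{2}^m\cong\mathbf{2}^{n+m}$, I would invoke the standard fact that for disjoint sets $A$ and $B$ the map $C\mapsto(C\cap A,C\cap B)$ is a Boolean-algebra isomorphism $\mathbf{2}^{A\dotcup B}\to\mathbf{2}^A\times\mathbf{2}^B$; taking $|A|=n$ and $|B|=m$ gives the required identification. There is no genuine obstacle here: the corollary is essentially a bookkeeping consequence of the preceding theorem and corollary, and the only thing to check is that one may iterate or chain the isomorphisms in the expected way, which is immediate because the previous theorem is stated as an identity (not merely up to isomorphism) of the complete ortholattices involved.
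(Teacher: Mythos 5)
Your proposal is correct and follows exactly the route the paper intends: the corollary is stated without proof as an immediate consequence of the preceding product theorem combined with parts (ii) and (iii) of the earlier corollary, which is precisely the chain of isomorphisms you write down. The closing observation that $\mathbf2^n\times\mathbf2^m\cong\mathbf2^{n+m}$ via $C\mapsto(C\cap A,C\cap B)$ is the standard bookkeeping step the authors take for granted.
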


\section{Pseudocomplemented lattices}

We can consider closed subsets also in pseudocomplemented posets and lattices. Recall that a (bounded) {\em poset} $(P,\le,0,1)$ is called {\em pseudocomplemented} if for each $x\in P$ there exists a greatest element $y\in P$ such that $x\wedge y$ exists and is equal to $0$. This element $y$ is called the {\em pseudocomplement} of $x$ and it is denoted by $x^*$. Hence,
\[
x\wedge y=0\qquad\text{ if and only if }\qquad y\leq x^*.
\]
Thus $^*$ is a unary antitone operation on $P$ satisfying $x\le x^{**}$ and $x^{***}\approx x^*$. We will express the fact that $(P,\le,0,1)$ is pseudocomplemented by the notation
\[
\mathbf P=(P,\le,{}^*,0,1).
\]
In a pseudocomplemented poset $\mathbf P$, the orthogonality relation $\perp$ is defined by
\[
x\perp y\qquad\text{ if and only if }\qquad y\le x^*.
\]
Similarly as in Section~2 for meet-semilattices with $0$, also here we have that $\perp$ is symmetric and that $x\perp x$ if and only if $x=0$; of course, $x\perp 0$ for each $x\in P$. The definition of $A^{\perp}$ for $A\subseteq P$ is the same as in the previous sections and $A$ is called {\em closed} if $A^{\perp\perp}=A$.

For every subset $A$ of $P$, $A^*$ denotes the set $\{x^*\mid x\in A\}$.

If we consider pseudocomplemented lattices, we can simplify several of the aforementioned concepts and we use the so-called Glivenko Theorem to describe the structure of the lattice of all closed subsets of a complete pseudocomplemented lattice.

First recall the following concepts.

A {\em pseudocomplemented meet-semilattice} is an algebra $(S,\wedge,{}^*,0)$ of type $(2,1,0)$ such that $(S,\wedge,0)$ is a meet-semilattice with $0$ satisfying
\[
x\wedge y=0\text{ if and only if }y\le x^*.
\]
A {\em pseudocomplemented lattice} is an algebra $(L,\vee,\wedge,{}^*,0,1)$ of type $(2,2,1,0,0)$ such that $(L,\vee,\wedge,0,1)$ is a bounded lattice and $(L,\le,{}^*,0,1)$ is a pseudocomplemented poset.

We can describe infima of subsets of $P^*$ as well as subsets of the form $A^\perp$ with $A\subseteq P$. This result will be used in the proof of the next theorems.

\begin{lemma}\label{lem1}
	Let $(P,\le,{}^*,0,1)$ be a pseudocomplemented poset and $A\subseteq P$ and assume $\bigvee A$ to exist. Then the following holds:
	\begin{enumerate}[{\rm(i)}]
		\item $\bigwedge A^*$ exists and $\bigwedge A^*=(\bigvee A)^*$,
		\item $A^\perp=(\bigvee A)^\perp$.
	\end{enumerate}
\end{lemma}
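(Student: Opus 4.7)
For part (i), the plan is to use antitonicity of ${}^*$ in both directions. Since $a\le\bigvee A$ for every $a\in A$, antitonicity immediately yields $(\bigvee A)^*\le a^*$, so $(\bigvee A)^*$ is already a lower bound of $A^*$ in $P$. To show it is the \emph{greatest} lower bound, I would take an arbitrary $z\in P$ with $z\le a^*$ for all $a\in A$ and try to deduce $z\le(\bigvee A)^*$. By the defining equivalence of pseudocomplementation, this reduces to verifying that the meet $(\bigvee A)\wedge z$ exists and equals $0$.

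For that, I would pick an arbitrary common lower bound $w$ of $\bigvee A$ and $z$ and argue $w=0$. From $w\le z\le a^*$ one obtains $w\wedge a=0$, equivalently $a\le w^*$, for every $a\in A$; hence $w^*$ is an upper bound of $A$, which forces $\bigvee A\le w^*$. Combining with $w\le\bigvee A$ gives the self-dual inequality $w\le w^*$, i.e.\ $w\wedge w=0$, and therefore $w=0$. This simultaneously establishes that the meet $(\bigvee A)\wedge z$ exists and is $0$, yielding $z\le(\bigvee A)^*$, and completes the proof of (i).

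For part (ii), unfolding the orthogonality via the equivalence $y\le x^*\Leftrightarrow x\wedge y=0\Leftrightarrow x\le y^*$ rewrites
\[
A^\perp=\{x\in P\mid x\le a^*\text{ for every }a\in A\},
\]
which is precisely the set of lower bounds of $A^*$. By (i) this set coincides with $\{x\in P\mid x\le(\bigvee A)^*\}=(\bigvee A)^\perp$.

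The only delicate step is the middle one in (i). Because $\mathbf P$ is just a poset, no infimum of $A^*$ is available a priori and one cannot manipulate $\bigwedge A^*$ directly; the leverage must come from using antitonicity in both directions to funnel an arbitrary common lower bound $w$ of $\bigvee A$ and $z$ into the inequality $w\le w^*$, after which the characteristic property of pseudocomplementation forces $w=0$.
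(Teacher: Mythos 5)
Your proof is correct and follows essentially the same route as the paper's: antitonicity gives that $(\bigvee A)^*$ is a lower bound of $A^*$, and the adjunction $y\le x^*\Leftrightarrow x\wedge y=0\Leftrightarrow x\le y^*$ turns a lower bound of $A^*$ into an upper bound of $A$, forcing it below $(\bigvee A)^*$. The only difference is cosmetic: the paper applies the adjunction directly to the lower bound $z$ (from $x\le z^*$ for all $x\in A$ it gets $\bigvee A\le z^*$ and hence $z\le(\bigvee A)^*$ in one step), whereas you detour through an auxiliary common lower bound $w$ and the observation $w\le w^*\Rightarrow w=0$; both are valid.
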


\begin{proof}
	\
	\begin{enumerate}[(i)]
		\item For every $x\in A$ we have $x\le\bigvee A$ and hence $(\bigvee A)^*\le x^*$. This shows that $(\bigvee A)^*$ is a lower bound of $A^*$. Now let $a$ be a lower bound of $A^*$. Then $a\le x^*$ for all $x\in A$ and hence $x\le a^*$ for all $x\in A$. This shows that $a^*$ is an upper bound of $A$. Therefore $\bigvee A\le a^*$ which implies $a\le(\bigvee A)^*$. This shows that $\bigwedge A^*$ exists and $\bigwedge A^*=(\bigvee A)^*$.
		\item We have
		\begin{align*}
			A^\perp & =\bigcap_{a\in A}a^\perp=\bigcap_{a\in A}\{x\in P\mid x\le a^*\}=\{x\in P\mid x\le a^*\text{ for all }a\in A\}= \\
			& =\{x\in P\mid x\le\bigwedge A^*\}=\{x\in P\mid x\le\left(\bigvee A\right)^*\}=\left(\bigvee A\right)^\perp.
		\end{align*}
	\end{enumerate}	
\end{proof}

The following result follows from Lemma~\ref{lem1}. It shows that in case of a complete pseudocomplemented lattice the set $\Cl(\mathbf L)$ can be described in a simple way.

\begin{corollary}\label{cor1}
	Let $\mathbf L=(L,\vee,\wedge,{}^*,0,1)$ be a complete pseudocomplemented lattice. Then $\Cl(\mathbf L)=\{x^\perp\mid x\in L\}$.
\end{corollary}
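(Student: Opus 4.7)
The plan is to show the two inclusions separately, both of which are immediate consequences of Lemma~\ref{lem1}(ii) once one exploits completeness.

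For the inclusion $\{x^\perp\mid x\in L\}\subseteq\Cl(\mathbf L)$, I would just invoke Remark~\ref{rem1}: every set of the form $B^\perp$ is closed because $A^{\perp\perp\perp}=A^\perp$, and in particular $x^\perp=\{x\}^\perp$ is closed for every $x\in L$.

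For the reverse inclusion, let $A\in\Cl(\mathbf L)$. Since $A=A^{\perp\perp}=(A^\perp)^\perp$, the set $A$ is of the form $C^\perp$ for $C:=A^\perp\subseteq L$. Now completeness of $\mathbf L$ guarantees that $x:=\bigvee C$ exists in $L$, and Lemma~\ref{lem1}(ii) then yields
\[
A=C^\perp=\Bigl(\bigvee C\Bigr)^\perp=x^\perp,
\]
which places $A$ into $\{x^\perp\mid x\in L\}$.

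There is no real obstacle here: the whole point is that in a complete pseudocomplemented lattice, Lemma~\ref{lem1}(ii) collapses the orthogonal of an arbitrary subset to the orthogonal of a single element (its join), so the generators $\{x^\perp\mid x\in L\}$ that appear in Remark~\ref{rem1} already form a system closed under arbitrary intersections, and hence coincide with $\Cl(\mathbf L)$.
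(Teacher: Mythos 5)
Your proposal is correct and follows essentially the same route as the paper: the paper likewise writes $\Cl(\mathbf L)=\{A^\perp\mid A\subseteq L\}=\{(\bigvee A)^\perp\mid A\subseteq L\}=\{x^\perp\mid x\in L\}$ using Lemma~\ref{lem1}(ii) together with completeness and the fact that $x=\bigvee\{x\}$. You have merely unpacked this chain of equalities into two explicit inclusions, which changes nothing of substance.
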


\begin{proof}
	According to (ii) of Lemma~\ref{lem1} we have
	\[
	\Cl(\mathbf L)=\{A^\perp\mid A\subseteq L\}=\{\left(\bigvee A\right)^\perp\mid A\subseteq L\}=\{x^\perp\mid x\in L\}
	\]
	since every element $x$ of $L$ can be written in the form $\bigvee\{x\}$.
\end{proof}

The next theorem is the well-known Glivenko Theorem, see \cite{Gl}. Our version is taken from \cite{Gr}.

\begin{theorem}\label{th3}
	If $(S,\wedge,{}^*,0)$ is a pseudocomplemented meet-semilattice then
	\[
	(S^*,\sqcup,\wedge,{}^*,0,1)
	\]
	with
	\[
	x\sqcup y:=(x^*\wedge y^*)^*
	\]
	for all $x,y\in S^*$ is a Boolean algebra.
\end{theorem}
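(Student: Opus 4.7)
The plan is to verify the Boolean algebra axioms on $S^*$ in the order: closure under the operations, lattice structure with $\sqcup$ as join, complement laws, and finally distributivity, which I expect to be the main technical point.

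First I would identify $S^*$ with the set of \emph{regular} elements $\{x\in S\mid x=x^{**}\}$, using $x^{***}=x^*$: if $x=y^*$ then $x^{**}=y^{***}=y^*=x$, and the other direction is immediate. Then I note that $1:=0^*$ is the greatest element of $S$ (since $x\wedge 0=0$ for all $x$ forces $x\le 0^*$), and both $0=1^*$ and $1=0^*$ lie in $S^*$. Closure of $S^*$ under $^*$ is immediate, while closure under $\wedge$ reduces to the identity $(x\wedge y)^{**}=x^{**}\wedge y^{**}$, which follows from antitonicity of $^*$ together with $a\le a^{**}$.

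Next I would check that $x\sqcup y:=(x^*\wedge y^*)^*$ is the supremum of $\{x,y\}$ in the induced poset $(S^*,\le)$: antitonicity gives $x\sqcup y\ge x^{**}=x$ and $x\sqcup y\ge y$, and if $z\in S^*$ satisfies $x,y\le z$ then $z^*\le x^*\wedge y^*$, hence $z=z^{**}\ge x\sqcup y$. For the complement laws, $x\wedge x^*=0$ is just the definition of the pseudocomplement, while $x\sqcup x^*=(x^*\wedge x^{**})^*=0^*=1$, using that $x^*\wedge x^{**}=0$ follows from $x^{**}\le x^{**}$.

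The main obstacle is distributivity; I would prove $x\wedge(y\sqcup z)=(x\wedge y)\sqcup(x\wedge z)$ for $x,y,z\in S^*$. The inequality $\ge$ is immediate from the supremum property in $S^*$, since $x\wedge y,x\wedge z\in S^*$ and both lie below the element $x\wedge(y\sqcup z)\in S^*$. For $\le$, it suffices to show $(x\wedge y)^*\wedge(x\wedge z)^*\le(x\wedge(y\sqcup z))^*$, i.e.\ that $u:=x\wedge(y\sqcup z)\wedge(x\wedge y)^*\wedge(x\wedge z)^*$ equals $0$. From $u\le x$ and $u\le(x\wedge y)^*$ one gets $u\wedge y=u\wedge x\wedge y=0$, so $u\le y^*$; analogously $u\le z^*$, hence $u\le y^*\wedge z^*$. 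But $u\le y\sqcup z=(y^*\wedge z^*)^*$ forces $u\wedge(y^*\wedge z^*)=0$, and combining this with $u\le y^*\wedge z^*$ yields $u=0$ as required. Taking $^*$ of the resulting inequality and using that both sides are regular gives $\le$.
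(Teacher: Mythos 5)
Your proof is correct and complete. Note, however, that the paper does not prove this statement at all: it is quoted as the classical Glivenko theorem with a reference to Gr\"atzer, so there is no in-paper argument to compare against line by line. Your write-up therefore supplies something the paper omits, and it does so soundly: the identification of $S^*$ with the regular elements, the verification that $\sqcup$ is the join in $(S^*,\le)$, the complement laws, and the distributivity argument via showing $u:=x\wedge(y\sqcup z)\wedge(x\wedge y)^*\wedge(x\wedge z)^*=0$ all check out. Your distributivity step is in fact the same technique the authors use in their proof of Theorem~\ref{th5} (the poset analogue): there they prove the dual inequality $(a\vee c)\wedge(a\vee b)\le a\vee(b\wedge c)$ by repeatedly converting $p\wedge q\wedge d^*=0$ into $q\wedge d^*\le p^*$, whereas you prove $x\wedge(y\sqcup z)\le(x\wedge y)\sqcup(x\wedge z)$ directly by the same kind of pseudocomplement bookkeeping; the two are interchangeable since either distributive law implies the other. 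One very minor caution: the general identity $(x\wedge y)^{**}=x^{**}\wedge y^{**}$ does \emph{not} follow from antitonicity and $a\le a^{**}$ alone (the inequality $x^{**}\wedge y^{**}\le(x\wedge y)^{**}$ needs its own small computation), but for the closure of $S^*$ under $\wedge$ you only invoke it for regular $x,y$, where the chain $(x\wedge y)^{**}\le x^{**}\wedge y^{**}=x\wedge y\le(x\wedge y)^{**}$ does close up exactly as you say, so no gap results.
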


\begin{remark}
	If $(L,\vee,\wedge,{}^*,0,1)$ is a pseudocomplemented lattice then we have $x\sqcup y=(x\vee y)^{**}$ for all $x,y\in L$ according to Lemma~\ref{lem1}.
\end{remark}

The following lemma establishes a certain relationship between pseudocomplementation and orthogonality. This relationship will be used in the proof of next theorem.

\begin{lemma}\label{lem2}
	Let $(P,\le,{}^*,0,1)$ be a pseudocomplemented poset and $a\in P$. Then $(a^*)^\perp=a^{\perp\perp}$.
\end{lemma}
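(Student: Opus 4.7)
The statement should follow essentially as a one-line consequence of Lemma~\ref{lem1}(ii) once the correct set is identified. The first step is to rewrite $a^\perp$ in a convenient form: by definition $x\perp a$ iff $x\le a^*$, so
\[
a^\perp=\{x\in P\mid x\le a^*\}=(a^*].
\]
In particular $a^\perp$ has a largest element, namely $a^*$, and hence $\bigvee a^\perp$ exists and equals $a^*$.

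Now apply Lemma~\ref{lem1}(ii) with $A:=a^\perp$. Since $\bigvee A=a^*$ exists in $P$, the lemma yields
\[
a^{\perp\perp}=(a^\perp)^\perp=A^\perp=\Bigl(\bigvee A\Bigr)^\perp=(a^*)^\perp,
\]
which is exactly the claim. So the whole argument is: identify $a^\perp$ as the principal down-set of $a^*$, note its join is $a^*$, and quote Lemma~\ref{lem1}(ii).

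There is no real obstacle, since Lemma~\ref{lem1}(ii) already does all the work. If one wanted a self-contained direct verification (not invoking Lemma~\ref{lem1}), one would unfold both sides: $(a^*)^\perp=(a^{**}]$, and $a^{\perp\perp}=\{y\in P\mid y\le x^*\text{ for all }x\le a^*\}$, and then use that the condition ``$y\le x^*$ for every $x\le a^*$'' is controlled by the extremal choice $x=a^*$, giving $y\le a^{**}$; the converse direction uses antitonicity of ${}^*$ together with $x\le x^{**}$. Either route works, but the Lemma~\ref{lem1}(ii) approach is cleaner and fits the flow of the paper.
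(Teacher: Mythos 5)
Your proposal is correct, and your primary route is genuinely different from the paper's. The paper proves Lemma~\ref{lem2} by a direct chain of elementwise equivalences: for $b\in P$, $b\in(a^*)^\perp$ iff $b\le a^{**}$ iff $a^*\le b^*$ iff $x\le b^*$ for all $x\le a^*$ iff $b\perp x$ for all $x\in a^\perp$ iff $b\in a^{\perp\perp}$ --- which is essentially the ``self-contained direct verification'' you sketch at the end. Your main argument instead observes that $a^\perp=(a^*]$ has $a^*$ as its largest element, so $\bigvee a^\perp=a^*$ exists, and then quotes Lemma~\ref{lem1}(ii) with $A=a^\perp$ to get $a^{\perp\perp}=\bigl(\bigvee a^\perp\bigr)^\perp=(a^*)^\perp$. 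This is valid (Lemma~\ref{lem1} precedes Lemma~\ref{lem2} in the paper, so there is no circularity) and is arguably more economical, since it reuses machinery already established; the paper's direct computation, on the other hand, is self-contained and makes visible exactly which properties of the pseudocomplement (antitonicity and the defining equivalence $x\wedge y=0\Leftrightarrow y\le x^*$) are being used. Either proof is acceptable; yours trades transparency of the underlying mechanism for brevity.
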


\begin{proof}
	For $b\in P$ the following are equivalent: $b\in(a^*)^\perp$; $b\le a^{**}$; $a^*\le b^*$; $x\le b^*$ for all $x\le a^*$; $b\perp x$ for all $x\in a^\perp$; $b\in a^{\perp\perp}$.
\end{proof}

In the next theorem we describe the structure of the lattice of closed subsets of a given complete pseudocomplemented lattice. Contrary to Theorem~\ref{th1}, we need not assume that $\mathbf L$ is atomic.

\begin{theorem}\label{th2}
	Let $\mathbf L=(L,\vee,\wedge,{}^*,0,1)$ be a complete pseudocomplemented lattice. Then $\BCl(\mathbf L)$ is a complete Boolean algebra.
\end{theorem}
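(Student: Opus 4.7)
The plan is to reduce the statement to Glivenko's Theorem (Theorem~\ref{th3}) by exhibiting an order isomorphism between $\BCl(\mathbf L)$ and the Boolean algebra of pseudocomplements $(L^*,\le)$. Since Lemma~\ref{lem3} already supplies a complete ortholattice structure on $\BCl(\mathbf L)$, and since ``being a Boolean algebra'' is an order-theoretic property that transfers across any order isomorphism, such an identification suffices.

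First I would pin down $\Cl(\mathbf L)$ concretely. By Corollary~\ref{cor1}, every closed subset has the form $x^\perp$ for some $x\in L$, and by the definition of $\perp$ via pseudocomplementation,
\[
x^\perp=\{y\in L\mid y\le x^*\}=(x^*].
\]
As $x$ ranges over $L$ the element $x^*$ ranges over $L^*$, so $\Cl(\mathbf L)=\{(y]\mid y\in L^*\}$.

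Next I would introduce $\psi\colon L^*\to\Cl(\mathbf L)$ by $\psi(y):=(y]$. The previous display yields surjectivity, injectivity follows from the fact that $y$ is the largest element of $(y]$, and the equivalence $y\le z\Longleftrightarrow(y]\subseteq(z]$ makes $\psi$ an order isomorphism. By Theorem~\ref{th3}, $(L^*,\sqcup,\wedge,{}^*,0,1)$ is a Boolean algebra, and consequently its order-isomorphic image $\BCl(\mathbf L)$ is a Boolean algebra too. Completeness is already guaranteed by Lemma~\ref{lem3}.

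I do not anticipate a real obstacle; the only point worth a sanity check is that the orthocomplementation ${}^\perp$ on $\BCl(\mathbf L)$ corresponds under $\psi$ to the Boolean complement ${}^*$ on $L^*$. Using Lemma~\ref{lem1}(ii),
\[
\psi(y)^\perp=(y]^\perp=y^\perp=(y^*]=\psi(y^*),
\]
so $\psi$ matches the two complementations, confirming that the Boolean algebra structure induced by $\psi$ agrees with the ortholattice structure of $\BCl(\mathbf L)$. Lemma~\ref{lem2} offers an alternative route — rewriting each closed $A=a^{\perp\perp}$ as $(a^*)^\perp$ — but the passage through $\psi$ appears cleanest.
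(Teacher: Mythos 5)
Your proposal is correct and follows essentially the same route as the paper: both identify $\Cl(\mathbf L)$ as $\{x^\perp\mid x\in L\}=\{(y]\mid y\in L^*\}$ via Corollary~\ref{cor1}, set up the order isomorphism $y\mapsto(y]$ (which is the paper's map $x^*\mapsto x^\perp$ in different notation), invoke Glivenko's Theorem~\ref{th3}, and verify that ${}^\perp$ matches the Boolean complement. The only cosmetic difference is that you verify the complementation via Lemma~\ref{lem1}(ii) where the paper uses Lemma~\ref{lem2}; both are valid.
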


\begin{proof}
	According to Lemma~\ref{lem3}, $\big(\Cl(\mathbf L),\vee,\cap,{}^\perp,\{0\},L\big)$ is a complete ortholattice with smallest element $\{0\}$ and greatest element $L$. According to Corollary~\ref{cor1} we have $\Cl(\mathbf L)=\{x^\perp\mid x\in L\}$. Let $a,b\in L$. Since $x^\perp=(x^*]$ for all $x\in L$, we have $a^*\le b^*$ if and only if $a^\perp\subseteq b^\perp$. This implies that $a^*=b^*$ if and only if $a^\perp=b^\perp$. We conclude that the mapping $x^*\mapsto x^\perp$ is a well-defined isomorphism from $(L^*,\le)$ to $\BCl(\mathbf L)$. Since the first poset is a Boolean algebra according to Theorem~\ref{th3}, the second one is a Boolean algebra, too. The unary operation of the first Boolean algebra maps $x^*$ onto $(x^*)^*$. Hence the unary operation of the second Boolean algebra maps $x^\perp$ onto $(x^*)^\perp$ which equals $(x^\perp)^\perp$ according to Lemma~\ref{lem2}. This means that $^\perp$ is the unary operation of the second Boolean algebra.
\end{proof}

The following example shows that $\BCl(\mathbf S)$ may be a Boolean algebra also in the case when $\mathbf S$ is not atomic.

\begin{example}
	If $L$ denotes the set of all open subsets of $\mathbb R$ and $A^*:=\bigcup\limits_{B\in L,B\subseteq\mathbb R\setminus A}B$ for all $A\in L$ then $\mathbf L:=(L,\cup,\cap,{}^*,\emptyset,\mathbb R)$ is a non-atomic complete pseudocomplemented lattice and $\BCl(\mathbf L)$ is a Boolean algebra according to Theorem~\ref{th2}.
\end{example}

The next example shows that $\BCl(\mathbf L)$ may be a Boolean algebra also in the case when $\mathbf L$ is not complete.

Let $\mathbb N$ denote the set of all positive integers.

\begin{example}
	The algebra
	\[
	\mathbf L:=(\{A\subseteq\mathbb N\mid A\text{ is finite or }\mathbb N\setminus A\text{ is finite}\},\cup,\cap,A\mapsto\mathbb N\setminus A,\emptyset,\mathbb N)
	\]
	is a non-complete atomic Boolean algebra and hence $\BCl(\mathbf L)\cong\mathbf2^{\{\{x\}\mid x\in\mathbb N\}}\cong\mathbf2^N$ according to Theorem~\ref{th1}.
\end{example}

As shown above, for a finite pseudocomplemented lattice $\mathbf L$ the lattice $\BCl(\mathbf L)$ is a Boolean algebra. However, in general, when orthogonality on a given set is not defined by pseudocomplementation or by disjointness, see e.g.\ \cite J, then this lattice need not share such a strong property. In fact, it was shown by J.~C.~Dacey, Jr.\ \cite D, by G.~Kalmbach \cite K and, alternatively, by the third author \cite L under what conditions the closed subsets form an orthomodular lattice.

The lattices of closed subsets for two pseudocomplemented lattices are shown in the following example.

\begin{example}
	$\text{}$ \\  	
	\begin{itemize}
		\item[{\rm(a)}] Consider the non-distributive pseudocomplemented lattice $\mathbf L_2=(L,\vee,\wedge,{}^*,0,1)$ depicted in Fig.~4:
		\begin{center}
			\setlength{\unitlength}{1.3mm}
			\begin{picture}(45,45)
				\put(10,25){\circle*{1.5}}
				\put(25,10){\circle*{1.5}}
				\put(25,40){\circle*{1.5}}
				\put(35,20){\circle*{1.5}}
				\put(35,30){\circle*{1.5}}
				\put(35,20){\line(0,1){10}}
				\put(25,10){\line(-1,1){15}}
				\put(25,10){\line(1,1){10}}
				\put(25,40){\line(-1,-1){15}}
				\put(25,40){\line(1,-1){10}}
				\put(24.2,6){$0$}
				\put(6,24){$b$}
				\put(37,19){$a$}
				\put(37,29){$c$}
				\put(24.2,42){$1$}
				\put(-14,0){{\rm Fig.~4: Non-distributive pseudocomplemented lattice $\mathbf L_2$}}
			\end{picture}
		\end{center}
		We have
		\[
		\begin{array}{l|ccccc}
			x      & 0 & a & b & c & 1 \\
			\hline
			x^*    & 1 & b & c & b & 0
		\end{array}
		\]
		The Boolean algebra $\BCl(\mathbf L_2)$ is visualized in Fig.~5:
		\begin{center}
			\setlength{\unitlength}{1.3mm}
			\begin{picture}(50,45)
				\put(10,25){\circle*{1.5}}
				\put(25,10){\circle*{1.5}}
				\put(25,40){\circle*{1.5}}
				\put(40,25){\circle*{1.5}}
				\put(25,10){\line(-1,1){15}}
				\put(25,10){\line(1,1){15}}
				\put(25,40){\line(-1,-1){15}}
				\put(25,40){\line(1,-1){15}}
				\put(22.7,6){$\{0\}$}
				\put(4,24){$(b]$}
				\put(42,24){$(c]$}
				\put(24.2,42){$L$}
				\put(3.5,0){{\rm Fig.~5: Boolean algebra $\BCl(\mathbf L_2)$}}
			\end{picture}
		\end{center}
		\vspace*{1mm} 
		\item[{\rm(b)}] Consider the pseudocomplemented lattice $\mathbf L_3=(L,\vee,\wedge,{}^*,0,1)$ depicted in Fig.~6:
		\begin{center}
			\setlength{\unitlength}{1.3mm}
			\begin{picture}(50,75)
				\put(10,25){\circle*{1.5}}
				\put(10,40){\circle*{1.5}}
				\put(25,10){\circle*{1.5}}
				\put(25,25){\circle*{1.5}}
				\put(25,40){\circle*{1.5}}
				\put(25,55){\circle*{1.5}}
				\put(40,25){\circle*{1.5}}
				\put(40,40){\circle*{1.5}}
				\put(25,70){\circle*{1.5}}
				\put(10,25){\line(0,1){15}}
				\put(40,25){\line(0,1){15}}
				\put(25,10){\line(0,1){15}}
				\put(25,40){\line(0,1){30}}
				\put(25,10){\line(-1,1){15}}
				\put(25,10){\line(1,1){15}}
				\put(25,25){\line(-1,1){15}}
				\put(25,25){\line(1,1){15}}
				\put(25,40){\line(-1,-1){15}}
				\put(25,40){\line(1,-1){15}}
				\put(25,55){\line(-1,-1){15}}
				\put(25,55){\line(1,-1){15}}
				\put(24.2,6){$0$}
				\put(6,24){$a$}
				\put(27,24){$b$}
				\put(42,24){$c$}
				\put(42,39){$f$}
				\put(6,39){$d$}
				\put(27,39){$e$}
				\put(27,55){$g$}
				\put(24.2,72){$1$}
				\put(-2,0){{\rm Fig.~6: Pseudocomplemented lattice $\mathbf L_3$}}
			\end{picture}
		\end{center}
		We have
		\[
		\begin{array}{l|ccccccccc}
			x & 0 & a & b & c & d & e & f & g & 1 \\
			\hline
			x^* & 1 & f & e & d & c & b & a & 0 & 0
		\end{array} 
		\]
		The Boolean algebra $\BCl(\mathbf L_3)$ is visualized in Fig.~7:
		\begin{center}
			\setlength{\unitlength}{1.3mm}
			\begin{picture}(50,60)
				\put(10,25){\circle*{1.5}}
				\put(10,40){\circle*{1.5}}
				\put(25,10){\circle*{1.5}}
				\put(25,25){\circle*{1.5}}
				\put(25,40){\circle*{1.5}}
				\put(25,55){\circle*{1.5}}
				\put(40,25){\circle*{1.5}}
				\put(40,40){\circle*{1.5}}
				\put(10,25){\line(0,1){15}}
				\put(40,25){\line(0,1){15}}
				\put(25,10){\line(0,1){15}}
				\put(25,40){\line(0,1){15}}
				\put(25,10){\line(-1,1){15}}
				\put(25,10){\line(1,1){15}}
				\put(25,25){\line(-1,1){15}}
				\put(25,25){\line(1,1){15}}
				\put(25,40){\line(-1,-1){15}}
				\put(25,40){\line(1,-1){15}}
				\put(25,55){\line(-1,-1){15}}
				\put(25,55){\line(1,-1){15}}
				\put(22.7,6){$\{0\}$}
				\put(4,24){$(a]$}
				\put(27,24){$(b]$}
				\put(42,24){$(c]$}
				\put(42,39){$(f]$} 
				\put(4,39){$(d]$}
				\put(27,39){$(e]$}
				\put(23.8,57){$L$}
				\put(3.5,0){{\rm \rm Fig.~7: Boolean algebra $\BCl(\mathbf L_3)$}}
			\end{picture}
		\end{center}
	\end{itemize}
\end{example}

Analogous to Lemma~\ref{lem3} we have

\begin{theorem}\label{th4}
	If $\mathbf P=(P,\le,{}^*,0,1)$ be a pseudocomplemented poset then
	\[
	\BCl(\mathbf P):=\big(\Cl(\mathbf P),\vee,\cap,{}^\perp,\{0\},P\big)
	\]
	is a complete ortholattice with
	\begin{align*}
		\bigvee_{i\in I}A_i & =\left(\bigcup_{i\in I}A_i\right)^{\perp\perp}, \\
		\bigwedge_{i\in I}A_i & =\bigcap_{i\in I}A_i
	\end{align*}
	for all families $A_i,i\in I,$ of elements of $\Cl(\mathbf P)$.
\end{theorem}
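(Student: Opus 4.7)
The plan is to mimic, almost verbatim, the proof of Lemma~\ref{lem3}, observing that nothing in that argument really used the existence of binary meets; all that mattered was that $\perp$ is symmetric and that $x\perp x$ forces $x=0$. Both properties carry over to the pseudocomplemented poset setting: from the definition $x\perp y\Leftrightarrow y\le x^*$ and the equivalence $y\le x^*\Leftrightarrow x\le y^*$ (which is immediate from $x\wedge y=0\Leftrightarrow y\le x^*$ applied twice), we get symmetry of $\perp$; and $x\perp x$ means $x\le x^*$, which forces $x\wedge x=x=0$.

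Next I would observe that the pair $({}^\perp,{}^\perp)$ is a Galois connection on $(2^P,\subseteq)$ induced by the relation $\perp$, since $A\subseteq B^\perp$ iff every element of $A$ is $\perp$-related to every element of $B$ iff $B\subseteq A^\perp$. Standard Galois connection theory then yields that $A\mapsto A^{\perp\perp}$ is a closure operator on $2^P$, so its fixed points $\Cl(\mathbf P)$ form a complete lattice under $\subseteq$, with meets given by intersection and joins given by $\bigvee_{i\in I}A_i=(\bigcup_{i\in I}A_i)^{\perp\perp}$. The smallest element is $P^\perp$, and since $x\perp y$ for all $y\in P$ forces $x\perp x$ hence $x=0$, we have $P^\perp=\{0\}$; the largest element is $\emptyset^\perp=P$.

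It remains to verify that ${}^\perp$ is an orthocomplementation on $\BCl(\mathbf P)$. Antitonicity of ${}^\perp$ is immediate from the definition; the identity $A^{\perp\perp\perp}=A^\perp$ (a general property of Galois connections) shows that $A^\perp$ is closed whenever $A$ is, and moreover $A^{\perp\perp}=A$ for every closed $A$, giving the involution law. For $A\in\Cl(\mathbf P)$, if $x\in A\cap A^\perp$ then $x\perp x$, hence $x=0$, so $A\cap A^\perp=\{0\}$. For the join, we have $A\vee A^\perp=(A\cup A^\perp)^{\perp\perp}$, and an element $y$ lies in $(A\cup A^\perp)^\perp=A^\perp\cap A^{\perp\perp}$ iff $y\in A^\perp$ and $y\perp z$ for every $z\in A^\perp$, whence $y\perp y$ and thus $y=0$. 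So $(A\cup A^\perp)^\perp=\{0\}$ and $A\vee A^\perp=\{0\}^\perp=P$.

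The main (very mild) obstacle is conceptual rather than technical: one must be sure that the semilattice-based Lemma~\ref{lem3} has not silently used the existence of meets somewhere. Inspection shows it has not; meets of elements of $P$ never appear in the argument, only intersections of subsets, the $^\perp$ operator, and the two abstract properties of $\perp$ noted in the first paragraph. Hence the proof transfers to $\mathbf P$ without alteration.
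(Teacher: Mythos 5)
Your proposal is correct and follows essentially the same route as the paper: both rest on the observation that $\perp$ is symmetric with $x\perp x$ only for $x=0$, so the Galois-connection machinery of Lemma~\ref{lem3} applies verbatim, giving a complete lattice with meets as intersections and $^\perp$ as an antitone involutive complementation. Your version merely spells out the details (e.g.\ the explicit check that $A\vee A^\perp=P$) that the paper leaves as ``easy to see.''
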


\begin{proof}
	It is easy to see that
	\[ 
	\bigcap_{i\in I}A_i^\perp=\left(\bigcup_{i\in I}A_i\right)^\perp
	\]
	for all families $A_i,i\in I,$ of elements of $\Cl(\mathbf P)$ and hence $\big(\Cl(\mathbf P),\subseteq\big)$ is a complete lattice whose meet-operation coincides with set-theoretical intersection. The unary operation $^\perp$ on $\Cl(\mathbf P)$ is an antitone involution and because of $A\cap A^\perp=\{0\}$ for all $A\subseteq P$ also a complementation.
\end{proof}

\section{Pseudocomplemented posets}

In this section we investigate pseudocomplemented posets whose complete ortholattice of closed subsets forms a Boolean algebra. We start with the following example.

\begin{example}\label{ex2}
	Consider the poset $\mathbf P$ visualized in Fig.~8:
	\begin{center}
		\setlength{\unitlength}{1.3mm}
		\begin{picture}(45,45)
			\put(15,20){\circle*{1.5}}
			\put(15,30){\circle*{1.5}}
			\put(5,30){\circle*{1.5}}
			\put(45,30){\circle*{1.5}}
			\put(25,10){\circle*{1.5}}
			\put(25,40){\circle*{1.5}}
			\put(35,20){\circle*{1.5}}
			\put(35,30){\circle*{1.5}}
			\put(35,20){\line(0,1){10}} 
			\put(15,20){\line(0,1){10}}
			\put(15,20){\line(2,1){20}}
			\put(35,20){\line(-2,1){20}}
			\put(25,10){\line(-1,1){20}}
			\put(25,10){\line(1,1){20}}
			\put(25,40){\line(-1,-1){10}}
			\put(25,40){\line(1,-1){10}}
			\put(25,40){\line(2,-1){20}}
			\put(25,40){\line(-2,-1){20}}
			\put(24.2,6){$0$}
			\put(11,19){$a$}
			\put(12,29){$d$}
			\put(37,19){$b$}
			\put(36.5,29){$e$}
			\put(1.5,29){$c$}
			\put(46.5,29){$f$}
			\put(24.2,42){$1$}
			\put(14,0){{\rm Fig.~8: Poset $\mathbf P$}}
		\end{picture}
	\end{center}
	Evidently, $\mathbf P$ is not a lattice. We have
	\[
	\begin{array}{l|cccccccc}
		x   & 0 & a & b & c & d & e & f & 1 \\
		\hline
		x^* & 1 & f & c & f & 0 & 0 & c & 0
	\end{array}
	\]
and
\[
0^\perp=P, a^\perp=c^\perp=\{0,b,f\}=(f], b^\perp=f^\perp=\{0,a,c\}=(c], d^\perp=e^\perp=1^\perp=\{0\}.
\]
These are all subsets closed with respect to orthogonality. They form a Boolean algebra with respect to set inclusion, see Fig.~9.
	\begin{center}
		\setlength{\unitlength}{1.3mm}
		\begin{picture}(45,45)
			\put(10,25){\circle*{1.5}}
			\put(25,10){\circle*{1.5}}
			\put(25,40){\circle*{1.5}}
			\put(40,25){\circle*{1.5}}
			\put(25,10){\line(-1,1){15}}
			\put(25,10){\line(1,1){15}}
			\put(25,40){\line(-1,-1){15}}
			\put(25,40){\line(1,-1){15}}
			\put(23,6){$\{0\}$}
			\put(4,24){$(c]$}
			\put(42,24){$(f]$}
			\put(24.2,42){$P$}
			\put(4,0){{\rm \rm Fig.~9\: Boolean algebra $\BCl(\mathbf P)$}}
		\end{picture}
	\end{center}
\end{example}

It is a question if the situation described in Example \ref{ex2} can be generalized, i.e.\ if for each pseudocomplemented poset $\mathbf P$ the complete ortholattice $\BCl(\mathbf P)$ of its closed subsets forms a Boolean algebra.

For this, we firstly prove a result similar to the famous theorem of Glivenko but formulated only for those pseudocomplemented posets $(P,\le;{}^*,0,1)$ where $(P^*,\le)$ is a meet-semilattice.

\begin{theorem}\label{th5}
	Let $\mathbf P=(P,\le,{}^*,0,1)$ be a pseudocomplemented poset where $(P^*,\le)$ is a meet-semilattice $(P^*,\wedge)$ and put
	\[
	x\vee y:=(x^*\wedge y^*)^*
	\]
	for all $x,y\in P^*$. Then $(P^*,\vee,\wedge,{}^*,0,1)$ is a Boolean algebra.
\end{theorem}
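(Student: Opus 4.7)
The plan is to reduce the claim to the classical Glivenko theorem (Theorem~\ref{th3}) by verifying that $(P^*,\wedge,{}^*,0)$ is itself a pseudocomplemented meet-semilattice.

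The easy bookkeeping goes first: $0=1^*$ and $1=0^*$ both lie in $P^*$; the identity $x^{***}=x^*$ shows $P^*=\{y\in P:y=y^{**}\}$, so ${}^*$ maps $P^*$ into itself and acts there as an antitone involution; and by hypothesis $(P^*,\wedge)$ is a meet-semilattice with least element $0$ and greatest element $1$.

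The decisive step is to show that, for $x,y\in P^*$, the equivalence $x\wedge y=0 \iff y\le x^*$ holds in $P^*$, which will identify ${}^*$ as its pseudocomplementation. The subtlety is that \emph{a priori} the meet in $P^*$ could differ from the (possibly non-existent) infimum of $\{x,y\}$ in $P$, so the equivalence inherited from $P$ does not transfer automatically. I would therefore first establish meet-compatibility: for $x,y\in P^*$ with meet $m$ in $P^*$, any $w\in P$ with $w\le x,y$ satisfies $w\le w^{**}$, and by antitonicity of ${}^*$ we have $w^{**}\le x^{**}=x$ and $w^{**}\le y^{**}=y$; since $w^{**}\in P^*$, it is a lower bound of $\{x,y\}$ in the semilattice $P^*$, so $w^{**}\le m$ and hence $w\le m$. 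Thus $m$ is also the infimum of $\{x,y\}$ in $P$. With meets agreeing, $x\wedge y=0$ in $P^*$ is the same statement as $x\wedge y=0$ in $P$, which by pseudocomplementation in $P$ is equivalent to $y\le x^*$, as required.

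Once $(P^*,\wedge,{}^*,0)$ is confirmed to be a pseudocomplemented meet-semilattice, Theorem~\ref{th3} immediately yields a Boolean algebra $(P^*,\sqcup,\wedge,{}^*,0,1)$ with $x\sqcup y=(x^*\wedge y^*)^*$, which is precisely the $\vee$ defined in the statement of the theorem (and it is straightforward to note that $\{x^*:x\in P^*\}=P^*$, so Glivenko's conclusion really lives on the same underlying set $P^*$). The main obstacle is the meet-compatibility argument in the previous paragraph; everything else is a direct appeal to Glivenko's theorem.
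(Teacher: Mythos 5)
Your proof is correct, but it takes a genuinely different route from the paper's. You reduce the statement to Glivenko's theorem (Theorem~\ref{th3}) by verifying that $(P^*,\wedge,{}^*,0)$ is itself a pseudocomplemented meet-semilattice; the essential content is your meet-compatibility argument (any lower bound $w$ of $x,y$ in $P$ satisfies $w\le w^{**}\le m$, where $m$ is the meet taken in $P^*$), which shows that the meet in $(P^*,\le)$ of two elements of $P^*$ coincides with their infimum in $(P,\le)$, so that the defining equivalence $x\wedge y=0\Leftrightarrow y\le x^*$ transfers from $P$ to $P^*$; since ${}^*$ is an involution on $P^*$, $(P^*)^*=P^*$ and Theorem~\ref{th3} then yields exactly the claimed Boolean algebra with the claimed join. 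The paper instead argues directly: the antitone involution ${}^*$ on $(P^*,\le)$ makes $(P^*,\vee,\wedge)$ a lattice satisfying De Morgan's laws, the identity $x\wedge x^*\approx0$ makes ${}^*$ a complementation, and distributivity is established by an explicit computation starting from $d:=a\vee(b\wedge c)$. Your approach is shorter and has the merit of making explicit the compatibility of meets in $P^*$ with infima in $P$ --- a point the paper's computation uses implicitly whenever it passes from an equation like $a\wedge b\wedge d^*=0$ (a meet formed in $P^*$) to an inequality like $b\wedge d^*\le a^*$ (an application of pseudocomplementation in $P$). The paper's approach has the compensating merit of being self-contained, since the semilattice version of Glivenko's theorem is only quoted, not proved, there.
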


\begin{proof}\
	First observe that $^*$ is an antitone involution on $(P^*,\le)$. Hence $(P^*,\vee,\wedge)$ is a lattice and De Morgan's laws hold. Since $\mathbf P$ satisfies the identity $x\wedge x^*\approx0$, the operation $^*$ is a complementation. Now let $a,b,c\in P^*$ and put $d:=a\vee(b\wedge c)$. Then $a\wedge b\le d$ and hence $a\wedge b\wedge d^*=0$ whence
	\begin{enumerate}
		\item[(1)] $b\wedge d^*\le a^*$.
	\end{enumerate}
	Since $b\wedge c\le d$ we obtain $b\wedge c\wedge d^*=0$ which implies $b\wedge d^*\le c^*$. This together with (1) yields $b\wedge d^*\le a^*\wedge c^*$. From this we get $b\wedge(a\vee c)\wedge d^*=0$ and hence
	\begin{enumerate}
		\item[(2)] $(a\vee c)\wedge d^*\le b^*$.
	\end{enumerate}
	Now we have $a\le d$ which implies $a\wedge d^*=0$. Therefore $a\wedge(a\vee c)\wedge d^*=0$ from which we get $(a\vee c)\wedge d^*\le a^*$. This together with (2) yields $(a\vee c)\wedge d^*\le a^*\wedge b^*$. From this we derive $(a\vee c)\wedge(a\vee b)\wedge d^*=0$ and hence $(a\vee c)\wedge(a\vee b)\le d$ showing $(P^*,\vee,\wedge)$ to be distributive.
\end{proof}

The question when for a subset $A$ of $P$ there exists some $b\in P$ satisfying $b^\perp=A^\perp$ is solved in the following lemma. However, we must notice that for a pseudocomplemented poset $(P,\le,{}^*,0,1)$ there may happen the following rather pathological situation. Namely, if $(P^*,\le)$ forms a complete lattice, $A\subseteq P^*$ and $a$ denotes the infimum of $A$ in $(P^*,\le)$ then there may exist some lower bound $b$ of $A$ in $(P,\le)$ satisfying $b\not\le a$. In such a case, the infimum of $A$ in the whole poset $(P,\le)$ either does not exists or it differs from $a$. In order to avoid such a pathological situation, we must require one more assumption in the next formulations.

\begin{lemma}
Let $(P,\le,{}^*,0,1)$ be a pseudocomplemented poset such that $(P^*,\le)$ is a complete lattice having the property that for each subset $B$ of $P^*$ the infimum of $B$ in $(P,\le)$ exists and coincides with the infimum of $B$ in $(P^*,\le)$ and let $A$ be a subset of $P$. Then there exists some $b\in P$ with $b^\perp=A^\perp$.
\end{lemma}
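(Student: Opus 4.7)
The goal is to exhibit a single element $b\in P$ such that $b^\perp=A^\perp$, so the plan is to compute $A^\perp$ explicitly and identify it as a principal downset $(b^*]$. The starting point is the description
\[
A^\perp=\bigcap_{x\in A}x^\perp=\{y\in P\mid y\le x^*\text{ for every }x\in A\},
\]
which shows that $A^\perp$ is exactly the set of lower bounds of $A^*=\{x^*\mid x\in A\}$ in $P$. This is the poset-analogue of the reduction used in Corollary~\ref{cor1}, but now we pull everything to the ``pseudocomplement side'' because we cannot assume that $\bigvee A$ exists in $P$.

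The first step is to form $c:=\bigwedge A^*$ computed in the complete lattice $(P^*,\le)$, which exists by hypothesis. The compatibility assumption now intervenes decisively: it asserts that $c$ is also the infimum of $A^*$ in the whole poset $(P,\le)$. Consequently the set of lower bounds of $A^*$ in $P$ is precisely $(c]$, and therefore $A^\perp=(c]$.

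The second step is to use that $c\in P^*$, so by the very definition of $P^*$ there is some $b\in P$ with $b^*=c$. For this $b$ the pseudocomplementation identity $b^\perp=(b^*]$ gives
\[
b^\perp=(b^*]=(c]=A^\perp,
\]
which finishes the argument.

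The main thing to watch, and really the only non-formal point, is the pathology singled out in the paragraph preceding the lemma: without the compatibility hypothesis there could be lower bounds of $A^*$ in $P$ that fail to lie below $c$, in which case the set of lower bounds in $P$ would strictly contain $(c]$ and the identification $A^\perp=(c]=b^\perp$ would collapse. Once the compatibility condition is invoked at exactly this step, the remainder is a routine unwinding of the definition of $\perp$ and of $x^\perp=(x^*]$ in a pseudocomplemented poset.
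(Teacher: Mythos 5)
Your proof is correct and follows essentially the same route as the paper's: both identify $A^\perp$ with the set of lower bounds of $A^*$ in $P$, invoke the compatibility hypothesis to equate that set with $(\bigwedge A^*]$, and then choose $b$ with $b^*=\bigwedge A^*$ so that $b^\perp=(b^*]=A^\perp$. Your explicit remark about where the compatibility condition is indispensable matches the discussion the authors place just before the lemma.
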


\begin{proof}
Since $\bigwedge A^*\in P^*$ there exists some $b\in P$ with $b^*=\bigwedge A^*$. Now we have
\begin{align*}
	A^\perp & =\bigcap_{a\in A}a^\perp=\bigcap_{a\in A}\{x\in P\mid x\le a^*\}=\{x\in P\mid x\le a^*\text{ for all }a\in A\}= \\
	& =\{x\in P\mid x\le\bigwedge A^*\}=\{x\in P\mid x\le b^*\}=b^\perp.
\end{align*}
\end{proof}

\begin{corollary}\label{cor2}
If $(P,\le,{}^*,0,1)$ be a pseudocomplemented poset such that $(P^*,\le)$ is a complete lattice having the property that for each subset $B$ of $P^*$ the infimum of $B$ in $(P,\le)$ exists and coincides with the infimum of $B$ in $(P^*,\le)$ then $\Cl(\mathbf P)=\{x^\perp\mid x\in P\}$.
\end{corollary}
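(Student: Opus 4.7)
The plan is to deduce this as an almost immediate consequence of the preceding lemma combined with the basic Galois-connection facts recorded in Remark \ref{rem1}. The two inclusions $\{x^\perp\mid x\in P\}\subseteq\Cl(\mathbf P)$ and $\Cl(\mathbf P)\subseteq\{x^\perp\mid x\in P\}$ will be handled separately, and neither is expected to require any new ideas beyond what is already established.

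For the inclusion $\{x^\perp\mid x\in P\}\subseteq\Cl(\mathbf P)$, I would invoke the general Galois-theoretic fact $C^\perp = C^{\perp\perp\perp}$ (noted in Remark \ref{rem1}) applied to $C=\{x\}$. This yields $(x^\perp)^{\perp\perp}=x^\perp$, so $x^\perp$ is closed; this direction needs no assumption on $P^*$ at all.

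For the reverse inclusion, I would start with an arbitrary $A\in\Cl(\mathbf P)$, so that $A=A^{\perp\perp}=(A^\perp)^\perp$. Applying the preceding lemma to the subset $A^\perp$ of $P$ produces some $b\in P$ with $b^\perp=(A^\perp)^\perp=A$. (This is the only place the hypothesis on $(P^*,\le)$ and on infima in $(P,\le)$ enters, via the lemma itself.) Hence $A\in\{x^\perp\mid x\in P\}$, and the two inclusions together give the stated equality.

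There is no real obstacle here: the statement is phrased so that the work is already done in the preceding lemma, and the corollary is essentially a repackaging of it in terms of $\Cl(\mathbf P)$. The only care needed is to spell out that a closed set is always of the form $B^\perp$ (take $B=A^\perp$), so that the lemma is applicable.
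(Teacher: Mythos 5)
Your proposal is correct and matches the intended argument: the paper treats this corollary as an immediate consequence of the preceding lemma, exactly as you do, using that the closed sets are precisely the sets of the form $B^\perp$ (take $B=A^\perp$) and that each such set equals some $b^\perp$ by the lemma. The only cosmetic remark is that the Galois-connection identities you cite from Remark~\ref{rem1} are stated there for meet-semilattices, but they hold verbatim for the orthogonality of a pseudocomplemented poset (cf.\ Theorem~\ref{th4}), so nothing is lost.
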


Using several of the results obtained so far we can show that if $(P^*,\le)$ is a complete lattice having the property that for each subset $B$ of $P^*$ the infimum of $B$ in $(P,\le)$ exists and coincides with the infimum of $B$ in $(P^*,\le)$, the complete ortholattice of closed subsets of $\mathbf P$ forms again a Boolean algebra.

\begin{theorem}
Let $(P,\le,{}^*,0,1)$ be a pseudocomplemented poset such that $(P^*,\le)$ is a complete lattice having the property that for each subset $B$ of $P^*$ the infimum of $B$ in $(P,\le)$ exists and coincides with the infimum of $B$ in $(P^*,\le)$. Then $\BCl(\mathbf P)$ is a complete Boolean algebra.
\end{theorem}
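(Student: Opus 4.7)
The plan is to run the same isomorphism argument that proves Theorem~\ref{th2}, only now with Corollary~\ref{cor2} and Theorem~\ref{th5} standing in for Corollary~\ref{cor1} and Theorem~\ref{th3}. Since $(P^*,\le)$ is a complete lattice it is in particular a meet-semilattice, so Theorem~\ref{th5} gives that $(P^*,\vee,\wedge,{}^*,0,1)$ is a Boolean algebra. Corollary~\ref{cor2} gives $\Cl(\mathbf P)=\{x^\perp\mid x\in P\}$, and since by definition $x^\perp=\{y\in P\mid y\le x^*\}=(x^*]$, we may rewrite this as $\Cl(\mathbf P)=\{(z]\mid z\in P^*\}$.

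Next I would introduce the map $\varphi\colon P^*\to\Cl(\mathbf P)$ defined by $\varphi(z):=(z]$. The previous display shows $\varphi$ is surjective. For $y,z\in P^*$ one has $y\le z\iff(y]\subseteq(z]$: the forward direction is trivial, the reverse follows from $y\in(y]\subseteq(z]$. Hence $\varphi$ is injective and both order-preserving and order-reflecting, i.e.\ it is an order isomorphism from $(P^*,\le)$ onto the underlying poset of $\BCl(\mathbf P)$. Because the join and meet in a lattice are determined by the order, this transports the lattice structure of $(P^*,\vee,\wedge)$ onto $\BCl(\mathbf P)$, so $\BCl(\mathbf P)$ is a distributive, complemented lattice, i.e.\ a Boolean algebra. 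Its completeness was already established in Theorem~\ref{th4}.

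The one step that must still be checked, and which I view as the only real obstacle, is that the orthocomplement $^\perp$ of $\BCl(\mathbf P)$ coincides with the Boolean complement transported from $(P^*,{}^*)$ via $\varphi$. Under $\varphi$, the Boolean complement of $\varphi(x^*)=(x^*]$ is $\varphi((x^*)^*)=\varphi(x^{**})=(x^{**}]$, whereas the orthocomplement in $\BCl(\mathbf P)$ is $(x^\perp)^\perp=x^{\perp\perp}$. Lemma~\ref{lem2} gives $x^{\perp\perp}=(x^*)^\perp$, and $(x^*)^\perp=(x^{**}]$ directly from the definition, so the two complements agree. Combining the previous paragraph with this identification, $\BCl(\mathbf P)$ is a complete Boolean algebra with orthocomplementation $^\perp$, completing the proof.
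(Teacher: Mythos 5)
Your proof is correct and follows essentially the same route as the paper's: both establish that $x^*\mapsto x^\perp=(x^*]$ is an order isomorphism from $(P^*,\le)$ onto $\BCl(\mathbf P)$, invoke Theorem~\ref{th5} to transport the Boolean structure, and use Lemma~\ref{lem2} to identify the orthocomplementation $^\perp$ with the transported Boolean complement. Your explicit verification that $\varphi$ is order-reflecting (hence injective) is a slightly more careful rendering of the paper's remark that $a^*\le b^*$ iff $a^\perp\subseteq b^\perp$, but the argument is the same.
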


\begin{proof}
	According to Theorem~\ref{th4}, $\big(\Cl(\mathbf P),\vee,\cap,{}^\perp,\{0\},P\big)$ is a complete ortholattice with smallest element $\{0\}$ and greatest element $P$. According to 
	Corollary~\ref{cor2} we have $\Cl(\mathbf P)=\{x^\perp\mid x\in P\}$. Let $a,b\in P$. Since $x^\perp=(x^*]$ for all $x\in P$, we have $a^*\le b^*$ if and only if $a^\perp\subseteq b^\perp$. This implies that $a^*=b^*$ if and only if $a^\perp=b^\perp$. We conclude that the mapping $x^*\mapsto x^\perp$ is a well-defined isomorphism from $(P^*,\le)$ to $\BCl(\mathbf P)$. Since the first poset is a Boolean algebra according to Theorem~\ref{th5}, the second one is a Boolean algebra, too. The unary operation of the first Boolean algebra maps $x^*$ onto $(x^*)^*$. Hence the unary operation of the second Boolean algebra maps $x^\perp$ onto $(x^*)^\perp$ which equals $(x^\perp)^\perp$ according to Lemma~\ref{lem2}. This means that $^\perp$ is the unary operation of the second Boolean algebra.
\end{proof}

In previous assertions we assume that $x^*\wedge y^*$ exists in $P^*$. The question arises if this condition can be omitted, i.e.\ if there exists some finite pseudocomplemented poset $\mathbf P$ with the property that its subset $P^*$ does not form a meet-semilattice. We now present such an example.

\begin{example}\label{ex3}
	Fig.~10 shows a pseudocomplemented non-lattice poset $\mathbf P=(P,\le,{}^*,0,1)$ where e.g.\ the elements $e$ and $f$ have no infimum in $(P^*,\le)$ and the elements $a$ and $b$ have no supremum in $(P^*,\le)$. Thus $(P^*,\le)=(P,\le)$ is not a lattice.
\begin{center}
\setlength{\unitlength}{1.3mm}
\begin{picture}(105,60)
\put(25,25){\circle*{1.5}}
\put(25,40){\circle*{1.5}}
\put(40,25){\circle*{1.5}}
\put(40,40){\circle*{1.5}}
\put(47.5,10){\circle*{1.5}}
\put(47.5,55){\circle*{1.5}}
\put(55,25){\circle*{1.5}}
\put(55,40){\circle*{1.5}}
\put(70,25){\circle*{1.5}}
\put(70,40){\circle*{1.5}}
\put(47.5,10){\line(-3,2){22.5}}
\put(47.5,10){\line(-1,2){7.5}}
\put(47.5,10){\line(1,2){7.5}}
\put(47.5,10){\line(3,2){22.5}}
\put(25,25){\line(0,1){15}}
\put(25,25){\line(1,1){15}}
\put(25,25){\line(2,1){30}}
\put(40,25){\line(-1,1){15}}
\put(40,25){\line(0,1){15}}
\put(40,25){\line(2,1){30}}
\put(55,25){\line(-2,1){30}}
\put(55,25){\line(0,1){15}}
\put(55,25){\line(1,1){15}}
\put(70,25){\line(-2,1){30}}
\put(70,25){\line(-1,1){15}}
\put(70,25){\line(0,1){15}}
\put(25,40){\line(3,2){22.5}}
\put(40,40){\line(1,2){7.5}}
\put(55,40){\line(-1,2){7.5}}
\put(70,40){\line(-3,2){22.5}}
\put(46.8,6){$0$}
\put(21,24){$a$}
\put(36,24){$b$}
\put(57,24){$c$}
\put(72,24){$d$}
\put(21,39){$e$}
\put(36,39){$f$}
\put(57,39){$g$}
\put(72,39){$h$}
\put(46.3,57){$1$}
\put(20.5,0){{\rm Fig.~10: Pseudocomplemented poset $\mathbf P$}}
\end{picture}\
\end{center}
We have
	\[
	\begin{array}{l|llllllllllll}
		x   & 0 & a & b & c & d & e & f & g & h & 1 \\
		\hline
		x^* & 1 & h & g & f & e & d & c & b & a & 0
	\end{array}
	\]
	and $\mathbf P$ satisfies the identity $x^{**}\approx x$.
\end{example}

\begin{example}
	Consider again the pseudocomplemented poset $\mathbf P$ from Example~\ref{ex3}. Although $(P^*,\le)$ is not a meet-semilattice, we can show rather surprisingly that the complete ortholattice of closed subsets of $\mathbf P$ forms a $16$-element Boolean algebra.
	
	It is easy to verify that for every $p\in P$, $(p]$ is a closed subset of $\mathbf P$. Besides of these there are exactly six closed subsets of $\mathbf P$, namely $\{0,a,b\}$, $\{0,a,c\}$, $\{0,a,d\}$, $\{0,b,c\}$, $\{0,b,d\}$ and $\{0,c,d\}$. The corresponding lattice $\BCl(\mathbf P)$ is visualized in Fig.~11.
\begin{center}
	\setlength{\unitlength}{1.3mm}
	\begin{picture}(105,75)
		\put(10,40){\circle*{1.5}}
		\put(25,25){\circle*{1.5}}
		\put(25,55){\circle*{1.5}}
		\put(40,25){\circle*{1.5}}
		\put(40,55){\circle*{1.5}}
		\put(47.5,10){\circle*{1.5}}
		\put(47.5,70){\circle*{1.5}}
		\put(55,25){\circle*{1.5}}
		\put(55,55){\circle*{1.5}}
		\put(70,25){\circle*{1.5}}
		\put(70,55){\circle*{1.5}}
		\put(85,40){\circle*{1.5}}
		\put(25,40){\circle*{1.5}}
		\put(40,40){\circle*{1.5}}
		\put(55,40){\circle*{1.5}}
		\put(70,40){\circle*{1.5}}
		\put(47.5,10){\line(-3,2){22.5}}
		\put(47.5,10){\line(-1,2){7.5}}
		\put(47.5,10){\line(1,2){7.5}}
		\put(47.5,10){\line(3,2){22.5}}
		\put(25,25){\line(-1,1){15}}
		\put(25,25){\line(1,1){30}}
		\put(40,25){\line(-2,1){30}}
		\put(40,25){\line(1,1){30}}
		\put(55,25){\line(2,1){30}}
		\put(70,25){\line(1,1){15}}
		\put(10,40){\line(1,1){15}}
		\put(10,40){\line(2,1){30}}
		\put(85,40){\line(-2,1){30}}
		\put(85,40){\line(-1,1){15}}
		\put(25,55){\line(3,2){22.5}}
		\put(40,55){\line(1,2){7.5}}
		\put(55,55){\line(-1,2){7.5}}
		\put(70,55){\line(-3,2){22.5}}
		\put(25,25){\line(0,1){30}}
		\put(40,25){\line(2,1){30}}
		\put(55,25){\line(-2,1){30}}
		\put(55,25){\line(0,1){15}}
		\put(70,25){\line(-2,1){30}}
		\put(70,25){\line(0,1){30}}
		\put(25,40){\line(2,1){30}}
		\put(40,40){\line(0,1){15}}
		\put(55,40){\line(-2,1){30}}
		\put(70,40){\line(-2,1){30}}
		\put(45.2,6){$\{0\}$}
		\put(19,24){$(a]$}
		\put(34,24){$(b]$}
		\put(57,24){$(c]$}
		\put(72,24){$(d]$}
		\put(-2.5,39){$\{0,a,b\}$}
		\put(13,39){$\{0,a,c\}$}
		\put(28,39){$\{0,a,d\}$}
		\put(56,39){$\{0,b,c\}$}
		\put(71,39){$\{0,b,d\}$}
		\put(87,39){$\{0,c,d\}$}
		\put(19,54){$(e]$}
		\put(34,54){$(f]$}
		\put(57,54){$(g]$}
		\put(72,54){$(h]$}
		\put(46.3,72){$P$}
		\put(25.5,0){{\rm Fig.~11: Boolean algebra $\BCl(\mathbf P)$}}
	\end{picture}
\end{center}
\end{example}

Finally, we show that the poset depicted in Fig.~12 is just the forbidden configuration for a pseudocomplemented poset $(Q,\le,{}^*,0,1)$ satisfying the ascending chain condition in order to have $(Q^*,\le)$ as a meet-semilattice.

\begin{theorem}
	Let $(Q,\le,{}^*,0,1)$ be a pseudocomplemented poset satisfying the ascending chain condition. Then $(Q^*,\le)$ is a meet-semilattice if and only if $(Q,\le)$ does not contain a configuration of the form visualized in Fig.~12 where $f^*$ and $g^*$ are maximal lower bounds of $a^*$ and $d^*$ in $(Q^*,\le)$, $a^{**}<d^*$ if and only if $d^{**}<a^*$, and $g^*<f^{**}$ if and only if $f^*<g^{**}$.
\begin{center}
	\setlength{\unitlength}{1.3mm}
	\begin{picture}(105,60)
		\put(25,25){\circle*{1.5}}
		\put(25,40){\circle*{1.5}}
		\put(40,25){\circle*{1.5}}
		\put(40,40){\circle*{1.5}}
		\put(47.5,10){\circle*{1.5}}
		\put(47.5,55){\circle*{1.5}}
		\put(55,25){\circle*{1.5}}
		\put(55,40){\circle*{1.5}}
		\put(70,25){\circle*{1.5}}
		\put(70,40){\circle*{1.5}}
		\multiput(25,25)(15,0){4}{\line(0,1){15}}
		\linethickness{.5mm}
		\put(47.5,10){\line(-3,2){22.5}}
		\put(47.5,10){\line(-1,2){7.5}}
		\put(47.5,10){\line(1,2){7.5}}
		\put(47.5,10){\line(3,2){22.5}}
		\put(25,25){\line(1,1){15}}
		\put(25,25){\line(2,1){30}}
		\put(40,25){\line(-1,1){15}}
		\put(40,25){\line(2,1){30}}
		\put(55,25){\line(-2,1){30}}
		\put(55,25){\line(1,1){15}}
		\put(70,25){\line(-2,1){30}}
		\put(70,25){\line(-1,1){15}}
		\put(25,40){\line(3,2){22.5}}
		\put(40,40){\line(1,2){7.5}}
		\put(55,40){\line(-1,2){7.5}}
		\put(70,40){\line(-3,2){22.5}}
		\put(46.8,6){$0$}
		\put(20,24){$a^{**}$}
		\put(35,24){$g^*$}
		\put(57,24){$f^*$}
		\put(72,24){$d^{**}$}
		\put(20,39){$d^*$}
		\put(35,39){$f^{**}$}
		\put(57,39){$g^{**}$}
		\put(72,39){$a^*$}
		\put(46.3,57){$1$}
		\put(24.5,0){{\rm Fig.~12: Forbidden configuration}}
	\end{picture}\
\end{center}
\end{theorem}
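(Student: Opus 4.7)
The plan is to prove the two implications separately. The forward direction is immediate: if the configuration of Fig.~12 occurs in $(Q,\le)$, then $a^*,d^*\in Q^*$ admit two distinct maximal common lower bounds $f^*,g^*\in Q^*$, so $\{a^*,d^*\}$ has no greatest lower bound in $(Q^*,\le)$, and consequently $(Q^*,\le)$ is not a meet-semilattice.

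For the converse I would argue contrapositively. Assume $(Q^*,\le)$ is not a meet-semilattice, and fix $u,v\in Q^*$ whose infimum in $(Q^*,\le)$ does not exist. By the ascending chain condition the nonempty set $S:=\{x\in Q^*\mid x\le u\text{ and }x\le v\}$ has maximal elements; were a maximal element unique, it would automatically be the infimum, contradicting our choice. So $S$ contains two distinct, hence incomparable, maximal elements $p,q$. I would then set $a^*:=u$, $d^*:=v$, $f^*:=p$, $g^*:=q$, whence $a^{**}=u^*$, $d^{**}=v^*$, $f^{**}=p^*$, $g^{**}=q^*$ all lie in $Q^*$, and verify that these eight elements realise the configuration of Fig.~12.

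The order relations are checked as follows. The thick edges $f^*,g^*<a^*,d^*$ assert that $p,q$ are proper common lower bounds of $u,v$; strictness follows because, for example, $p=u$ would make $u$ a common lower bound of $\{u,v\}$ and hence the infimum, contradicting our choice. Applying $^*$ converts these into the thick edges $a^{**},d^{**}<f^{**},g^{**}$ by antitonicity. For the two iff conditions of the theorem, antitonicity of $^*$ combined with $x^{**}=x$ for $x\in Q^*$ gives $u^*\le v\iff v^*\le u$ and $p\le q^*\iff q\le p^*$, so both biconditionals are automatic. Maximality of $f^*,g^*$ as lower bounds of $a^*,d^*$ in $(Q^*,\le)$ is by construction.

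The main obstacle is to verify that the eight elements $u,v,p,q,u^*,v^*,p^*,q^*$ are pairwise distinct, so the configuration does not degenerate. Many inequalities are immediate: $u,v,p,q>0$ (else the infimum trivially exists); $u\ne v$ and $p\ne q$; the equation $x=x^*$ forces $x=0$; $u=v^*$ would yield $u\wedge v=0$ and hence a meet, contradiction; $u=p^*$ together with $p\le u$ forces $p=0$, contradiction. The delicate case is the possibility $p=q^*$ (equivalently $q=p^*$), in which $p,q$ would be mutual pseudocomplements of each other inside $Q^*$; handling it requires either exploiting the freedom to replace $(p,q)$ by another pair of incomparable maximal elements of $S$, or supplying a short separate argument showing that such a degeneracy contradicts the assumed non-existence of the infimum of $\{u,v\}$ in $(Q^*,\le)$.
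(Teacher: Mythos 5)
Your overall strategy is the same as the paper's: the forward direction is the observation that two distinct maximal common lower bounds preclude an infimum, and the converse is proved contrapositively by fixing $u,v\in Q^*$ with no infimum in $(Q^*,\le)$, invoking the ascending chain condition to obtain two distinct (hence incomparable) maximal common lower bounds $p,q$ in $Q^*$, and checking that $u,v,p,q$ together with their pseudocomplements realise Fig.~12. The substance of the paper's proof is precisely the pairwise-distinctness case analysis that you sketch.

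However, you leave one case genuinely open: you flag $p=q^*$ (in the paper's notation $f^*=g^{**}$, equivalently $g^*=f^{**}$) as delicate and say only that handling it ``requires either exploiting the freedom to replace $(p,q)$ \dots\ or supplying a short separate argument,'' without doing either. That is a gap, though a small one, and no re-choice of $(p,q)$ is needed: from $q\le u$ antitonicity gives $u^*\le q^*$, so $p=q^*$ would yield $u^*\le q^*=p\le u$, i.e.\ $u^*\le u$; since $u\wedge u^*=0$ this forces $u^*=0$, hence $u=u^{**}=1$, and then $v$ itself is the infimum of $\{u,v\}$ in $(Q^*,\le)$, a contradiction. (The paper phrases this as: $g^*=f^{**}$ implies $a^{**}\le f^{**}=g^*\le a^*$, contradicting $a^*\parallel a^{**}$.) A second, smaller omission: the figure also asserts the non-comparabilities $d^*\not\le a^{**}$ and $f^{**}\not\le g^*$ (only the upward thin edges are permitted), i.e.\ $v\not\le u^*$ and $p^*\not\le q$ in your notation; the paper proves both explicitly, and each again follows by a one-line argument of the same kind ($v\le u^*$ would make $0$ the infimum of $\{u,v\}$, and $p^*\le q$ would give $u^*\le p^*\le q\le u$).
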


\begin{proof}
	If $(Q^*,\le)$ contains a configuration of the form depicted in Fig.~12 satisfying the above mentioned properties then $(Q^*,\le)$ is not a meet-semilattice since $a^*$ and $d^*$ have no infimum in $(Q^*,\le)$. Conversely, assume $(Q^*,\le)$ not to be a meet-semilattice. Then there exist elements $a^*$ and $d^*$ of $Q^*$ having no infimum in $(Q^*,\le)$. Let $f^*$ and $g^*$ be different maximal lower bounds of $a^*$ and $d^*$ in $(Q^*,\le)$. It is clear that $a^*\parallel d^*$ and $f^*\parallel g^*$. From this we conclude that $a^*$, $d^*$, $f^*$ and $g^*$ are different from $0$ and $1$ and hence the same is true for $a^{**}$, $d^{**}$, $f^{**}$ and $g^{**}$. Therefore $a^*\parallel a^{**}$, $d^*\parallel d^{**}$, $f^*\parallel f^{**}$ and $g^*\parallel g^{**}$. We have $f^*\le a^*$, $f^*\le d^*$, $g^*\le a^*$ and $g^*\le d^*$ and hence $a^{**}\le f^{**}$, $a^{**}\le g^{**}$, $d^{**}\le f^{**}$ and $d^{**}\le g^{**}$. It is clear that $a^{**}\le d^*$ if and only if $d^{**}\le a^*$ and that $g^*\le f^{**}$ if and only if $f^*\le g^{**}$. Now $d^*\le a^{**}$ would imply $f^*\le d^*\le a^{**}\le f^{**}$ contradicting $f^*\parallel f^{**}$. This shows $d^*\not\le a^{**}$. Moreover, $f^{**}\le g^*$ would imply $a^{**}\le f^{**}\le g^*\le a^*$ contradicting $a^*\parallel a^{**}$. Therefore we have $f^{**}\not\le g^*$. Next we show that the eight elements $a^{**}$, $g^*$, $f^*$, $d^{**}$, $d^*$, $f^{**}$, $g^{**}$ and $a^*$ are pairwise different. We have $f^*\ne g^*$ and because of $a^*\ne d^*$ also $a^{**}\ne d^{**}$. Now $a^{**}\in\{f^*,g^*\}$ would imply $a^{**}\le a^*$ contradicting $a^*\parallel a^{**}$. Similarly, $d^{**}\in\{f^*,g^*\}$ would imply $d^{**}\le d^*$ contradicting $d^*\parallel d^{**}$. Hence $a^{**}$, $g^*$, $f^*$ and $d^{**}$ and therefore also $d^*$, $f^{**}$, $g^{**}$ and $a^*$ are pairwise different. Now $a^{**}=d^*$ would imply $g^*\le a^*$ and $g^*\le a^{**}$, a contradiction. $a^{**}=f^{**}$ would imply $a^*=f^*\le d^*$ contradicting $a^*\parallel d^*$. $a^{**}=g^{**}$ would imply $a^*=g^*\le d^*$, contradicting $a^*\parallel d^*$. Finally, $a^{**}=a^*$ would contradict $a^*\parallel a^{**}$. This shows $a^{**}\not\in\{d^*,f^{**},g^{**},a^*\}$. Now $g^*=d^*$ would imply $d^*=g^*\le a^*$ contradicting $a^*\parallel d^*$. $g^*=f^{**}$ would imply $a^{**}\le f^{**}=g^*\le a^*$ contradicting $a^*\parallel a^{**}$. $g^*=g^{**}$ would contradict $g^*\parallel g^{**}$. Finally, $g^*=a^*$ would imply $a^*=g^*\le d^*$ contradicting $a^*\parallel d^*$. This shows $g^*\not\in\{d^*,f^{**},g^{**},a^*\}$. From symmetry reasons we obtain $\{f^*,d^{**}\}\cap\{d^*,f^{**},g^{**},a^*\}=\emptyset$. Thus we have shown that the eight elements $a^{**}$, $g^*$, $f^*$, $d^{**}$, $d^*$, $f^{**}$, $g^{**}$ and $a^*$ are pairwise different completing the proof of the theorem.
\end{proof}

Observe that the configuration visualized in Fig.~12 is included in Fig.~10.

\begin{corollary}
	If $(Q,\le,{}^*,0,1)$ is a pseudocomplemented poset satisfying the ascending chain condition and having the property that $(Q^*,\le)$ is not a meet-semilattice then $(Q^*,\le)$ contains a configuration of the form depicted in Fig.~13.
	\begin{center}
	\setlength{\unitlength}{1.3mm}
	\begin{picture}(45,45)
		\put(15,20){\circle*{1.5}}
		\put(15,30){\circle*{1.5}}
		\put(25,10){\circle*{1.5}}
		\put(25,40){\circle*{1.5}}
		\put(35,20){\circle*{1.5}}
		\put(35,30){\circle*{1.5}}
		\put(35,20){\line(0,1){10}}
		\put(15,20){\line(0,1){10}}
		\put(15,20){\line(2,1){20}}
		\put(35,20){\line(-2,1){20}}
		\put(25,10){\line(-1,1){10}}
		\put(25,10){\line(1,1){10}}
		\put(25,40){\line(-1,-1){10}}
		\put(25,40){\line(1,-1){10}}
		\put(24.2,6){$0$}
		\put(10.5,19){$g^*$}
		\put(10.5,29){$d^*$}
		\put(37,19){$f^*$}
		\put(37,29){$a^*$}
		\put(24.2,42){$1$}
		\put(-3.5,0){{\rm Fig.~13: Another forbidden configuration}}
	\end{picture}
\end{center}
\end{corollary}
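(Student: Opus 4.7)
The plan is to invoke the preceding theorem and simply forget half of its output. By that theorem, the failure of $(Q^*,\le)$ to be a meet-semilattice produces eight pairwise distinct elements $a^{**}, g^*, f^*, d^{**}, d^*, f^{**}, g^{**}, a^*$ of $Q^*$ arranged as in Fig.~12, all lying strictly between $0$ and $1$, with $f^*$ and $g^*$ two distinct maximal lower bounds of the incomparable pair $\{a^*, d^*\}$ in $(Q^*,\le)$. In particular we obtain $f^*\parallel g^*$ and $a^*\parallel d^*$, together with the four inequalities $f^*<a^*$, $f^*<d^*$, $g^*<a^*$, $g^*<d^*$.

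To extract Fig.~13, I would restrict attention to the six-element subposet of $(Q^*,\le)$ induced on $\{0, g^*, f^*, d^*, a^*, 1\}$. These six elements are pairwise distinct: the four middle elements were shown distinct in the preceding theorem, none of them equals $0$ or $1$ because the theorem places them strictly between, and $0\ne 1$ since $(Q^*,\le)$ contains the incomparable pair $\{a^*,d^*\}$. The induced comparabilities are exactly those drawn in Fig.~13, namely $0$ below everything and $1$ above everything, the four cross-relations $f^*, g^*\le a^*, d^*$ already listed, and the two incomparabilities $f^*\parallel g^*$ and $a^*\parallel d^*$; no further comparabilities can arise among the six chosen elements.

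There is essentially no obstacle here, since the corollary is a rereading of the theorem after discarding the four second-pseudocomplement elements $a^{**}, d^{**}, f^{**}, g^{**}$ from the configuration of Fig.~12. The only bookkeeping point worth flagging is the appeal to the ascending chain condition on $(Q,\le)$, which is what the preceding theorem uses to guarantee the existence of the maximal lower bounds $f^*, g^*$ in the first place; once those have been produced, restriction to the six-element subposet is immediate.
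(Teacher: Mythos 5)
Your proposal is correct and matches the paper's intent: the corollary is stated without a separate proof precisely because it is the restriction of the Fig.~12 configuration (produced by the preceding theorem from a pair $a^*,d^*$ without infimum and two distinct maximal lower bounds $f^*,g^*$) to the six elements $0,g^*,f^*,d^*,a^*,1$. Your verification that these six are pairwise distinct and that the induced comparabilities are exactly those of Fig.~13 is the whole argument.
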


{\bf Data availability statement} No datasets were generated or analyzed during the current study.

%
%
%
%

Authors' addresses:

Ivan Chajda \\
Palack\'y University Olomouc \\
Faculty of Science \\
Department of Algebra and Geometry \\
17.\ listopadu 12 \\
771 46 Olomouc \\
Czech Republic \\
ivan.chajda@upol.cz

Miroslav Kola\v r\'ik \\
Palack\'y University Olomouc \\
Faculty of Science \\
Department of Computer Science \\
17.\ listopadu 12 \\
771 46 Olomouc \\
Czech Republic \\
miroslav.kolarik@upol.cz

Helmut L\"anger \\
TU Wien \\
Faculty of Mathematics and Geoinformation \\
Institute of Discrete Mathematics and Geometry \\
Wiedner Hauptstra\ss e 8-10 \\
1040 Vienna \\
Austria, and \\
Palack\'y University Olomouc \\
Faculty of Science \\
Department of Algebra and Geometry \\
17.\ listopadu 12 \\
771 46 Olomouc \\
Czech Republic \\
helmut.laenger@tuwien.ac.at

\end{document}